\numberwithin{equation}{section}
\theoremstyle{plain}
\newtheorem{teo}{Theorem}[section]
\newtheorem{lemma}[teo]{Lemma}
\newtheorem{cor}[teo]{Corollary}
\newtheorem{prop}[teo]{Proposition}
\newtheorem{obs}[teo]{Observation}
\newtheorem{rem}{Remark}
 \theoremstyle{definition}
\newtheorem{defi}[teo]{Definition}
\newtheorem{?}[teo]{Problem}
\newcommand{\R}{\mathbb{R}}
\newcommand{\Q}{\mathbb{Q}}
\newcommand{\F}{\mathcal{F}}
\newcommand{\PP}{\mathcal{P}}
\newcommand{\RR}{\mathcal{R}}
\newcommand{\T}{\mathbb{T}} 
\newcommand{\N}{\mathbb{N}}
\newcommand{\Su}{\mathbb{S}}
\newcommand{\ds} {\displaystyle}
\newcommand{\supp} {{\rm{supp}}}
\begin{document}

\title{Generic H\"{o}lder foliations with smooth leaves}

\author[E. Fuentes]{Enzo Fuentes}

\address{Instituto de Matem\'aticas \\ Facultad de Ciencias \\
Pontificia Universidad Cat\'olica de Valpara\'iso} 

\email{enzo.fuentes.m@gmail.com}

 \subjclass[2018]{Secondary: 37C20, 37D10, 37C40}

 \keywords{Invariant foliations, absolute continuity, generic properties, disintegration}

\begin{abstract} In this work, we consider a specific space of foliations with $C^1$ leaves and H\"{o}lder holonomies of the square $M=[0,1]^2$, with some topology and we show that a generic such foliation is not absolutely continuous, furthermore, the conditional measures defined by Rokhlin disintegration are Dirac measures on the leaves. This space of foliations is motivated by the foliations that appear in hyperbolic systems and partially hyperbolic systems.
\end{abstract}

\maketitle

\tableofcontents

\section{Introduction} 

\ \
\ \

In dynamical systems, an important aspect for the study of ergodicity is the regularity of the invariant foliations for the system. A continuous foliation with $C^r$ leaves is a partition $\mathcal{F}$ of a manifold $M$(with dimension $d\geq2$) into $C^r$ submanifolds of dimension $k$, for some $0<k<d$ and $1\leq r\leq\infty$, such that for every $p\in M$, there exists a continuous local chart 
$$\Phi:B_1^k\times B_1^{d-k}\to M, \ \ \ (B_1^m \mbox{ denotes the unit ball in } \R^m)$$ with $\Phi(0,0)=p$ and such that the restriction of every horizontal $B_1^k\times\{\eta\}$ is a $C^r$ embedding depending continuously on $\eta$ and whose image is contained in some $\mathcal{F}$-leaf. The image of such a chart $\Phi$ is a foliation box and $\Phi(B_1^k\times\{\eta\})$ are the corresponding local leaves. We say that a foliation $\F$ is absolutely  continuous if given any pair of smooth transversals to the foliation $\tau_1$ and $\tau_2$, the $\F$-holonomy $h_{\F}$ is absolutely continuous with respect to the induced Riemannian measures in the transversals $\lambda_{\tau_1}$ and $\lambda_{\tau_2}$. In section \ref{rok} we give more details about the different definitions of the absolute continuity of foliations.

\ \

In 1967, Anosov \cite{A67} prove that a transitive Anosov diffeomorphism of class $C^2$ that preserves the volume is ergodic (using the classic Hopf's argument). To prove this, an important step was to prove that the stable and unstable foliations are absolutely continuous. Recall that a diffeomorphism $f$ of class $C^r$ with $r\geq 1$ is an Anosov diffeomorphism if there exists a $Df$- invariant splitting of the tangent bundle $TM=E^s\oplus E^u$ and a Riemannian metric on $M$ such that the vector in $E^s$ are uniformly contracted by $Df$ and the vectors in $E^u$ are uniformly expanded.  After the work of Anosov, this result was generalized to Anosov diffeomorphisms of class $C^{1+\alpha}$ (a proof of this can be found in \cite{M87}). A few years later, Pugh and Shub in 1972 \cite{PS72} and Brin with Pesin in 1974 \cite{BP74} proved that if $f:M\to M$ is a partially hyperbolic diffeomorphism of class $C^{1+\alpha}$, then the stable and unstable foliations are absolutely continuous. Recall that a diffeomorphism $f$ of class $C^r$ with $r\geq 1$ is a partially hyperbolic diffeomorphism if there exists a $Df$- invariant splitting of the tangent bundle $TM=E^s\oplus E^c \oplus E^u$ and a Riemannian metric on $M$ such that the vector in $E^s$ are uniformly contracted by $Df$, the vectors in $E^u$ are uniformly expanded and we have an intermediate behaviour for the vectors in $E^c$. Besides, in the year 1976, Pesin \cite{P76} proved that for non-uniformly hyperbolic diffeomorphisms of class $C^{1+\alpha}$, the stable and unstable foliations are absolutely continuous. For more details of the definition of these diffeomorphisms, see \cite{BP01}.

\ \

With all of this, a natural question was considering a diffeomorphism of class $C^1$ and ask if the stable and unstable foliations are absolutely continuous, but Robinson and Young in 1980 \cite{RY80} constructed a $C^1$ Anosov diffeomorphism in $\T^2$ such that the stable and unstable foliations are non-absolutely continuous. 

\ \

In another direction, many authors have studied the absolute continuity of the central foliation of a partially hyperbolic diffeomorphism. Shub and Wilkinson in 2000 \cite{SW00} considered the automorphism of $\T^3$, given by $A_3 = \begin{pmatrix} A_2 & 0 \\ 0 & 1 \end{pmatrix}$, where $A_2=\begin{pmatrix} 2 & 1 \\ 1 & 1 \end{pmatrix}$, and they proved that arbitrarily close to $A_3$, there exists a $C^1$-open set $U\subset Diff^2_{\mu}(\T^3)$ such that for each $g\in U$, the central foliation $\F_{g}^c$ is not absolutely continuous. Furthermore, in the same year, Ruelle and Wilkinson \cite{RW01} proved that if $g\in U$, then the foliation $\F_{g}^c$ are absolutely singular, this mean that the conditional measures defined by the Rokhlin disintegration are atomic. Later in 2003, Baraviera and Bonatti \cite{BB03} considered a compact Riemannian manifold endowed with a $C^2$-volume form $\omega$, a $C^1$ Anosov flow $X:\R\times M\to M$ that preserves the volume $\omega$, and $f$ the time-one map of $X$ (that is a $C^1$ partially hyperbolic diffeomorphism), then for all $g$ inside an open set $C^1$-close to $f$, $\F_{g}^c$ and any leaf $L_c$ of $\F_{g}^c$, the set of points of $L_c$ having positive Lyapunov exponents has Lebesgue measure $0$ in $L_c$, and this implies that $\F_{g}^c$ is non-absolutely continuous with respect to Lebesgue for any $\omega$-preserving $g$ close to $f$ such that $$\int_M \log J_{g}^c(x)d\omega (x)>0.$$

In 2007, Hirayama and Pesin \cite{HP07} find sufficient conditions to obtain a non-absolutely continuous central foliation. In this case, they considered a $C^2$ partially hyperbolic diffeomorphism $f$ in a Riemannian compact manifold that preserves a smooth measure $\mu$ such that
\begin{enumerate}
\item
the central distribution $E^c$ is integrable to a foliation $\F^c$ with smooth compact leaves;
\item
$f$ has negative (positive) central exponents.
\end{enumerate}

Then the central foliation $\F^c$ is non-absolutely continuous. Furthermore, if $\mu$ is ergodic, then the conditional measures induced by $\mu$ on the leaves of $\F^c$ are atomic. A couple of years later, Saghin and Xia \cite{SX09} considered a linear automorphism $T_A:\T^n\to\T^n$ with splitting dominated $T\T^n=E^1\oplus E^2\oplus E^3$, $E^2$ uniformly expanding such that $J_2$ (the Jacobian of $T_A$ on $E_2$) is a simple eigenvalue of ${T_A}_{*}:H_{k_2}(\T^n,\R)\to H_{k_2}(\T^n,\R)$ and there is no other eigenvalue of absolute value of $J_2$, then there exist an open set of volume preserving diffeomorphisms $U$, $C^1$ arbitrarily close to $T_A$, such that for any $f\in U$, the weak unstable foliation of $f$, $\F_2$ is non-absolutely continuous.  Another result about pathological foliations is given by Gogolev in 2012 \cite{G12} where he showed that for a large set of volume preserving partially hyperbolic diffeomorphisms of the $\T^3$ with non-compact central leaves, the central foliation is non-absolutely continuous. Also, Viana and Yang in 2013 \cite{VY13} proved that for any small $C^1$-neighborhood $\mathcal{W}$ of the $C^k$ partially hyperbolic diffeomorphism $f_0=g_0\times id$  with $k>1$ in the space of volume preserving diffeomorphisms of $N=M\times \Su^1$, where $g_0$ is Anosov transitive diffeomorphism in a compact manifold $M$, 

\begin{enumerate}
\item
$\mathcal{W}_0=\{f\in\mathcal{W}:\lambda^c(f)\neq0\}$ is $C^1$-open and dense in $\mathcal{W}$, where $\lambda^c(f)$ is the integrated center Lyapunov exponent of $f$ relative to the Lebesgue measure;
\item
if $f\in\mathcal{W}$ and $\lambda^c(f)>0$ ($\lambda^c(f)<0$) then the center foliation and the center stable (unstable) foliation are not absolutely continuous;
\item
there exists a non-empty $C^1$-open set $\mathcal{W}_1\subset\{f\in\mathcal{W}_0:\lambda^c (f)>0\}$ such that the center unstable foliation of every $C^k$ diffeomorphism $g\in\mathcal{W}_1$ is absolutely continuous.
\end{enumerate}

Finally, the last result that we will mention was made by \'Avila, Viana and Wilkinson \cite{AVW15}, where they proved that for a volume-preserving perturbation $C^1$-close of the time-one map of the geodesic flow of a compact surface with negative curvature, the Liouville measure has Lebesgue disintegration along the center foliation, or the disintegration is necessarily atomic.

\ \

To sum up, it is important to mention that all of these results consider foliations which are given directly by the dynamics of some diffeomorphism and obtain that generically they are not absolutely continuous. In the literature, this fact is called "pathological foliations" or "Fubini nightmare", for example, in the work of Milnor \cite{M97}. However, Milnor mentioned that Yorke did a similar construction, based on tent maps. 

\ \

The goal of this paper is to show that in the absence of enough regularity, a generic foliation (not necessarily of dynamical origin) is non-absolutely continuous. For this purpose we are considering an abstract space of foliations, so let $M=[0,1]^2$ and $\mu$ the Lebesgue measure in $M$. Now let us define the space of foliations $\F_{(C,\beta)}$: Let $C>1$, $0\leq\alpha<\beta<1$ and consider the functions $f:M\to\R$ such that:

\begin{align}
&\quad \quad \quad f \mbox{ is } C^1 \mbox{ uniformly in the first variable,} & \\
&\quad \quad \quad f(0,y)=y,\mbox{ for all }y\in[0,1], & \\
&\quad \quad \quad f(x,0)=0,\mbox{ for all }x\in[0,1], & \\
&\quad \quad \quad f(x,1)=1,\mbox{ for all }x\in[0,1] \mbox{ and} & \\
&\quad \quad \quad f \mbox{ is } (C,\beta)\mbox{-bi-H\"{o}lder in the second variable}.
\end{align}

For the last condition, we mean that for all $x\in[0,1]$ and $y_1,y_2\in[0,1]$, with $y_1\neq y_2$, $$C^{-1}|y_2-y_1|^{1/\beta}\leq|f(x,y_2)-f(x,y_1)|\leq C|y_2-y_1|^\beta.$$ 
Denote $\F_{(C,\beta)}$ the set of these functions, and note that each $f\in\F_{(C,\beta)}$ represents a foliation of $M$, i.e., for each $f\in\F_{(C,\beta)}$, the graph of the function $f(\cdot,y):[0,1]\to[0,1]$ represent a leaf of this foliation, for every $y\in[0,1]$. This allows us to define a "metric between foliations": for $f,g\in\F_{(C,\beta)}$, define the metric 
$$d_{\alpha}(f,g):=\|f-g\|_{C^0}+\left\|\frac{\partial f}{\partial x}-\frac{\partial g}{\partial x}\right\|_{C^0}+\sup_{x\in[0,1]}\{\|h_{0,x}^f-h_{0,x}^g\|_{\alpha}\},\|h_{x,0}^f-h_{x,0}^g\|_{\alpha}\},$$
where 
$$\|f\|_{C^0}=\sup_{(x,y)\in M}\{|f(x,y)|\},\mbox{ \ \ }\|h\|_{\alpha}=\sup_{y_1,y_2\in[0,1]}\left\{\frac{|h(y_2)-h(y_1)|}{|y_2-y_1|^{\alpha}}\right\},$$
 and $h_{0,x}^f=f(x,\cdot)$, $h_{x,0}^{f}=f(x,\cdot)^{-1}$ are the holonomies of the foliation with leaves the graphs of $f(\cdot,y)$, between the vertical lines through $0$ and $x$. 
 
\begin{figure}[h]
\begin{center}
\includegraphics[width=6.06cm]{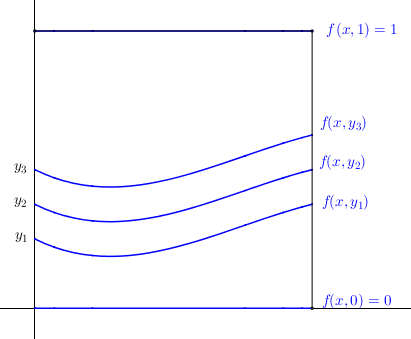}
\end{center}
\caption{$f\in\F_{(C,\beta)}$} \label{etiqueta}
\end{figure} 

\begin{rem} In general, for a $C^1$ Anosov diffeomorphism or partially hyperbolic diffeomorphism, the stable and unstable foliations are H\"{o}lder continuous with $C^1$ leaves, the same type as the foliations defined in this paper. The same occurs for the center foliations of $C^r$ partially hyperbolic diffeomorphisms, with $r\geq 1$.
\end{rem}

An important aspect is that given $f\in\F_{(C,\beta)}$, we can define a partition $\PP_{f}$ of $[0,1]^2$, where each element of the partition $\PP_f$ is the graph of $f(\cdot,y)$, for each $y\in[0,1]$, i.e., $$P\in\PP_f \mbox{ if and only if } P=\left\{f(x,y):x\in[0,1]\right\},$$ for some $y\in[0,1]$. Furthermore, this partition $\PP_f$ is measurable (in section \ref{rok} we give more details), so for almost every $P\in\PP_f$, there exist a probability measure $\mu_P$ (called conditional measure) supported in $P$. With all of this, we can enunciate the main theorem:

\begin{teo} \label{main} There exists a residual set $\mathcal{R}\subseteq\F_{(C,\beta)}$ such that for all $f\in\mathcal{R}$, the foliation $\PP_f$ is non-absolutely continuous. Furthermore, for $f\in\mathcal{R}$ and $\hat{\mu}$-a.e. $P\in\PP_f$, $\mu_P$ is mono-atomic, where $\mu_P$ is the conditional measure relative to $P$.
\end{teo}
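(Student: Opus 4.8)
The plan is a Baire-category argument: I will write the desired residual set as a countable intersection $\mathcal R=\bigcap_{n\in\N}\mathcal{O}_n$ of open dense subsets of $(\F_{(C,\beta)},d_\alpha)$, where $\mathcal{O}_n$ expresses an \emph{approximate mono-atomicity at scale $1/n$}, and then upgrade these approximate statements to the two conclusions by a limiting argument. A preliminary (soft) point is that $(\F_{(C,\beta)},d_\alpha)$ is a complete metric space, hence a Baire space: along a $d_\alpha$-Cauchy sequence the functions and their first partials converge uniformly, so the limit is again $C^1$ uniformly in the first variable and satisfies the boundary normalizations; the two Hölder terms in $d_\alpha$ make the holonomies $f(x,\cdot)$ and their inverses converge uniformly, uniformly in $x$, and since the two-sided $(C,\beta)$-Hölder condition is a \emph{closed} condition the limit lies in $\F_{(C,\beta)}$. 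Observe also that, because $f(x,0)=g(x,0)=0$, the $\alpha$-Hölder term in $d_\alpha$ already dominates the $C^0$-distance of the holonomies.

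The key reformulation is through the homeomorphism $\Psi_f(x,y)=(x,f(x,y))$ of $M$, which conjugates the pair $(\mu,\PP_f)$ to $(\nu,\mathcal{H})$, where $\mathcal{H}$ is the partition into horizontal segments and $\nu:=(\Psi_f^{-1})_\ast\mu$ disintegrates over the first coordinate into the Lebesgue--Stieltjes measures $\nu^{(x)}$ of $y\mapsto f(x,y)$. Thus $\mu_P$ is a Dirac mass for $\hat\mu$-a.e.\ $P$ if and only if, for every $\varepsilon>0$, there is a Borel set $Z\subseteq M$ with $\nu(Z)>1-\varepsilon$ whose intersection with $\hat\mu$-a.e.\ horizontal segment has diameter $<\varepsilon$. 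I therefore let $\mathcal{O}_n$ be the set of $f$ admitting such a $Z$ with $\varepsilon=1/n$ of the special form $Z=\bigcup_{i=1}^{N}[a_i,b_i]\times Y_i$, where the $Y_i$ are pairwise disjoint \emph{open} subsets of $[0,1]$ and $b_i-a_i<1/n$. Then $\nu_f(Z)=\sum_i\int_{a_i}^{b_i}\lambda\bigl(f(x,\cdot)(Y_i)\bigr)\,dx$; since for $U$ open the map $h\mapsto\lambda(h(U))$ on increasing homeomorphisms is lower semicontinuous for the $C^0$ topology (Fatou's lemma applied to the sum of the lengths of the images of the component intervals of $U$), $f\mapsto\nu_f(Z)$ is lower semicontinuous for $d_\alpha$ (by Fatou again, integrating over the strips, and because $d_\alpha$-convergence forces $C^0$-convergence of the holonomies); hence each $\{f:\nu_f(Z)>1-1/n\}$ is open and $\mathcal{O}_n$, a union of such sets, is open.

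The heart of the matter --- and the step I expect to be the main obstacle --- is density of each $\mathcal{O}_n$: given $f\in\F_{(C,\beta)}$ and $\delta>0$, one must produce $g\in\mathcal{O}_n$ with $d_\alpha(f,g)<\delta$ that still belongs to $\F_{(C,\beta)}$. One first replaces $f$ by the harmless $d_\alpha$-small perturbation $(1-s)f(x,y)+sy$, which still lies in $\F_{(C,\beta)}$ and now in its ``Hölder interior'' (the two-sided inequalities hold with a uniform gap, so the exponents of $f(x,\cdot)$ are strictly better than $\beta$), leaving room to post-compose with a crushing homeomorphism. The model building block is, for a prescribed $\beta'<1$, a $(C_{\beta'},\beta')$-bi-Hölder homeomorphism of $[0,1]$ mapping a fat Cantor set of measure $>1-1/(2n)$ onto a set of measure $<1/(2n)$; such maps exist for $C_{\beta'}$ large enough (choosing the Cantor set and the expanded, scale-balanced measure on its complement compatibly with $\beta'$), and the constraint ``$1/\beta>\beta$'' is exactly what gives the freedom to expand and compress simultaneously at fine scales. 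Performing this crushing \emph{blockwise} on each of many tiny $y$-blocks yields a homeomorphism $H$ that is $C^0$-close to the identity while producing the required concentration, with bi-Hölder constant $O\!\bigl(M^{-(1-\beta')}\bigr)$ if there are $M$ blocks; one then turns $H$ on, $C^1$ in $x$, over finitely many narrow $x$-strips and sets $g(x,\cdot):=H_x\circ f(x,\cdot)$. Because $H_x$ is $C^0$-small, $\|g-f\|_{C^0}\le C\|H_x-\mathrm{id}\|_{C^0}^{\beta}$ is small, the correction to $\partial_xf$ is small, and the $\alpha$-Hölder norm of the correction is $O(M^{-(\beta-\alpha)})$ small --- this is precisely where $\alpha<\beta$ enters, since its $\beta$-Hölder norm need not be small; and $g(x,\cdot)=H_x\circ f(x,\cdot)$ stays $(C,\beta)$-bi-Hölder because $H_x$ has tiny constant and exponent close to $1$, while $f$ is Hölder-interior. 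The genuinely delicate point, which I expect to require the most care, is to arrange the finitely many concentration regions on the strips so that they witness diameter control rather than merely measure control (i.e.\ to keep the sets $Y_i$ effectively disjoint in the leaf direction), while simultaneously respecting the $C^1$-in-$x$ gluing and the uniform two-sided Hölder bounds; this is what separates a merely singular from an (essentially) mono-atomic disintegration.

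Finally, for $f\in\mathcal R=\bigcap_n\mathcal{O}_n$ one has witnesses $Z_n$ with $\nu(M\setminus Z_n)\to0$; passing to a subsequence so that $\sum_n\nu(M\setminus Z_n)<\infty$, a Borel--Cantelli/diagonal argument shows that for $\hat\mu$-a.e.\ $P$ the conditional measure $\mu_P$ is concentrated at a single point, i.e.\ is mono-atomic. Since an atomic disintegration of volume along a foliation is incompatible with absolute continuity of all its holonomies --- the relation between the disintegration and the transversal-holonomy notions of absolute continuity being exactly what is developed in Section~\ref{rok} --- the same residual set $\mathcal R$ consists of non-absolutely continuous foliations, which is the theorem.
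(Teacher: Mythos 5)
Your overall architecture---completeness of $(\F_{(C,\beta)},d_\alpha)$, open sets encoding approximate atomicity at a scale $1/n$, a fine-scale singular perturbation for density, and a limiting/Borel--Cantelli upgrade to Dirac conditionals and non-absolute continuity---matches the paper's, and your encoding of the open sets via lower semicontinuity of $f\mapsto\nu_f(Z)$ is a reasonable alternative to the paper's sets $A_{n,m,I}$ (which instead require every dyadic band to have relative mass in each rational vertical strip close to $0$ or to $1$). The problem is the density step, which you rightly call the heart, and there the proposal has two concrete gaps beyond the one you flag yourself. First, interpolating with the identity, $(1-s)f(x,y)+sy$, improves the H\"older \emph{constant} (to $(1-s)C+s<C$) but not the \emph{exponent}: a map whose modulus of continuity is genuinely of order $t^\beta$ at some point remains so after averaging with the identity. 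Consequently, if $f(x,\cdot)$ is only $\beta$-H\"older and your crushing map $H_x$ is $\beta'$-H\"older with $\beta'<1$ (as any map sending a fat Cantor set to a null set must be at small scales, unless its Lipschitz constant is huge), the composition is only $(\beta\beta')$-H\"older, and since $\beta\beta'<\beta$ the resulting bound $c\,|y_2-y_1|^{\beta\beta'}$ does \emph{not} imply $C|y_2-y_1|^{\beta}$ for $|y_2-y_1|$ small. The missing ingredient is a preliminary mollification in the second variable making $f$ bi-Lipschitz in $y$ (the paper's Lemma \ref{lema2}); only then does composing with a small-constant crushing map keep you inside $\F_{(C,\beta)}$.

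Second, for $g(x,\cdot)=H_x\circ f(x,\cdot)$ the chain rule gives $\partial_x g=(\partial_x H)(x,f(x,y))+H_x'(f(x,y))\,\partial_x f(x,y)$, and any crushing homeomorphism necessarily has difference quotients of order $1/\varepsilon$ on the set it expands (an increasing $L$-Lipschitz map sends the complement of the fat Cantor set, of measure $\varepsilon$, to a set of measure at most $L\varepsilon$, so crushing forces $L\geq(1-2\varepsilon)/\varepsilon$). Hence $\|\partial_x g-\partial_x f\|_{C^0}$ is large wherever $\partial_x f\neq 0$, and your assertion that ``the correction to $\partial_x f$ is small'' fails: the second term of $d_\alpha$ is not controlled by post-composition. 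The paper avoids this by not composing at all: the perturbation $\tilde f$ is a convex interpolation, with $x$-dependent weight $\tilde a(x)$, between the values of $f_3$ on the two boundary leaves of each dyadic band, so both the displacement and its $x$-derivative are bounded by the band width $2^{-n+1}$ times derivatives of the weight (Lemma \ref{lema4}). Finally, you explicitly leave unresolved how to arrange the disjoint sets $Y_i$ so as to witness diameter rather than mere measure control; this is not a side issue but the combinatorial core, and the paper resolves exactly it by working with one rational strip $I$ at a time (the alternating bands $P_1,P_2$ of Lemma \ref{lema5}) and closing with the lemma that a measure assigning mass $0$ or $1$ to every rational interval is a Dirac mass. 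As written, the proposal is a correct plan with the paper's key lemmas missing at precisely the points where the H\"older and $C^1$-in-$x$ constraints bite.
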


\begin{rem} The set $\mathcal{R}$ is not open. The smooth foliations are dense in $\F_{(C,\beta)}$, and they are absolutely continuous.
\end{rem}

In section \ref{rok} we define the disintegration of a measure, we mentioned the Rokhlin's disintegration theorem, we define the different notions about the absolute continuity of foliations and discuss some properties that relate the absolute continuity of a foliation with the conditional measures defined by the Rokhlin's disintegration. In section \ref{spa} we show that the space of foliations is a complete metric space, so the countable intersetion of open and dense sets are dense. In section \ref{pro} we define sets $A_{n,m,I}$ that are open, then the sets $B_{m,I}=\bigcup_{n\in\N}A_{n,m,I}$ they are also open, and we show that the sets $B_{m,I}$ are dense, which is the main proposition in this paper. In section \ref{teo} we show the proof of the Theorem \ref{main} and some important remarks.

\ \
\ \

\section{Rokhlin's disintegration and absolute continuity of foliations}{\label{rok}}

\ \
\ \

In this section we are going to recall the classic result of Rokhlin. Let $M$ be a separable complete metric space, $\mu$ a Borel measure on $M$ and $\PP$ a partition of $M$. Denote by $\pi:M\to\PP$ the natural projection that associates to each point $x\in M$, the element $P(x)$ of the partition containing $x$. We say that $\mathcal{Q}\subset\PP$ is measurable if 
$$\pi^{-1}(\mathcal{Q})=\mbox{union of elements } P \mbox{ of } \PP \mbox{ that belongs to } \mathcal{Q}$$ 
is a measurable subset of $M$. It is easy to see that the family $\hat{\mathcal{B}}$ of the measurable sets is a $\sigma$-\'algebra in $\PP$. With this, define the quotient measure $$\hat{\mu}(\mathcal{Q}):=\mu(\pi^{-1}(\mathcal{Q})), \mbox{ for each }\mathcal{Q}\in\hat{\mathcal{B}}.$$

\begin{defi}
We say that $\mu$ has a disintegration relative to a partition $\PP$ if there exists a family $\{\mu_P:P\in\PP\}$ of probabilities in $M$ such that for all measurable set $E\subset M$:
\begin{enumerate}
\item
$\mu_P(P)=1$, for $\hat{\mu}$-a.e. $P\in\PP$.
\item
The map $P\to\mu_P(E)$ is measurable.
\item
$\mu(E)=\ds\int\mu_P(E)d\hat{\mu}(P)$.
\end{enumerate}
Such probabilities $\mu_P$ are called conditional measures (probabilities) of $\mu$ relative to $\PP$.
\end{defi}

\begin{prop} \cite{Ro67} Suppose that the $\sigma$-algebra $\mathcal{B}$ admits some countable generator. If $\{\mu_P:P\in\PP\}$ and $\{\nu_P:P\in\PP\}$ are two disintegrations for $\mu$ with respect to $\PP$, then $\mu_P=\nu_P$, for $\hat{\mu}$-a.e. $P\in\PP$.
\end{prop}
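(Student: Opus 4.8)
The plan is to reduce the almost-everywhere equality of the two families of conditional measures to their equality on a countable $\pi$-system and then to invoke Dynkin's $\pi$--$\lambda$ theorem fibrewise. First I would fix a countable generator of $\mathcal{B}$ and replace it by the collection of all finite intersections of its members; this is still a countable family, it is closed under finite intersections, and it generates the same $\sigma$-algebra $\mathcal{B}$. Call it $\{E_n\}_{n\in\N}$.

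The key preliminary step is to upgrade item (3) in the definition of disintegration to an ``integration over a piece of the quotient'' identity: for every measurable $E\subseteq M$ and every $\mathcal{Q}\in\hat{\mathcal{B}}$,
\[
\mu\bigl(E\cap\pi^{-1}(\mathcal{Q})\bigr)=\int_{\mathcal{Q}}\mu_P(E)\,d\hat{\mu}(P).
\]
This follows by applying (3) to the measurable set $E\cap\pi^{-1}(\mathcal{Q})$ together with (1): since $\pi^{-1}(\mathcal{Q})$ is a union of elements of $\PP$ and $\mu_P(P)=1$ for $\hat{\mu}$-a.e.\ $P$, one has $\mu_P(\pi^{-1}(\mathcal{Q}))=\mathbf{1}_{\mathcal{Q}}(P)$ for $\hat{\mu}$-a.e.\ $P$, hence $\mu_P(E\cap\pi^{-1}(\mathcal{Q}))=\mu_P(E)\,\mathbf{1}_{\mathcal{Q}}(P)$ for $\hat{\mu}$-a.e.\ $P$. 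The same identity holds for the family $\{\nu_P\}$. Now fix $n$ and combine the two identities for $E=E_n$:
\[
\int_{\mathcal{Q}}\mu_P(E_n)\,d\hat{\mu}(P)=\mu\bigl(E_n\cap\pi^{-1}(\mathcal{Q})\bigr)=\int_{\mathcal{Q}}\nu_P(E_n)\,d\hat{\mu}(P)
\]
for every $\mathcal{Q}\in\hat{\mathcal{B}}$. Since $P\mapsto\mu_P(E_n)$ and $P\mapsto\nu_P(E_n)$ are $\hat{\mathcal{B}}$-measurable (item (2)) and bounded, taking $\mathcal{Q}=\{P:\mu_P(E_n)>\nu_P(E_n)\}$ and then $\mathcal{Q}=\{P:\mu_P(E_n)<\nu_P(E_n)\}$ forces $\mu_P(E_n)=\nu_P(E_n)$ for $\hat{\mu}$-a.e.\ $P$; let $N_n$ denote the corresponding $\hat{\mu}$-null exceptional set.

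Finally I would set $N=N_0\cup\bigcup_{n\in\N}N_n$, where $N_0$ is a $\hat{\mu}$-null set off which $\mu_P(P)=\nu_P(P)=1$; then $N$ is $\hat{\mu}$-null because it is a countable union of null sets. For $P\notin N$ the Borel probability measures $\mu_P$ and $\nu_P$ agree on the $\pi$-system $\{E_n\}$, which generates $\mathcal{B}$; since $\{E\in\mathcal{B}:\mu_P(E)=\nu_P(E)\}$ is a $\lambda$-system (here using that both measures are probabilities, so complements and increasing countable unions are handled), Dynkin's theorem gives $\mu_P=\nu_P$ on all of $\mathcal{B}$. Hence $\mu_P=\nu_P$ for $\hat{\mu}$-a.e.\ $P\in\PP$. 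The only points requiring care are bookkeeping ones --- reducing the generator to a countable $\pi$-system, and discarding only countably many null sets --- so I do not anticipate any substantive obstacle; the measure-theoretic content is exactly ``equal integrals over all measurable sets imply equal integrands a.e.'' followed by the $\pi$--$\lambda$ argument.
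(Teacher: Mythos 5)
Your proof is correct. The paper itself gives no argument for this proposition --- it is stated with a citation to Rokhlin and never proved --- so there is nothing internal to compare against; what you have written is the standard uniqueness argument (reduce the generator to a countable $\pi$-system, localize property (3) to $\mu(E\cap\pi^{-1}(\mathcal{Q}))=\int_{\mathcal{Q}}\mu_P(E)\,d\hat{\mu}$ via $\mu_P(\pi^{-1}(\mathcal{Q}))=\mathbf{1}_{\mathcal{Q}}(P)$, conclude $\mu_P(E_n)=\nu_P(E_n)$ off a single countable union of null sets, and finish with Dynkin's $\pi$--$\lambda$ theorem), and each step is justified exactly by the three defining properties of a disintegration as stated in the paper.
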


\begin{defi} We say that $\PP$ is a measurable partition if there exists a measurable set $M_0\subset M$ with full measure such that, restrict to $M_0$, 
$$\PP=\bigvee_{n=1}^\infty \PP_n,$$ 
for some increasing sequence $\PP_1 \prec\PP_2\prec\ldots\prec\PP_n \prec\ldots$ of countable partitions. Remember that $\PP_i\prec\PP_{i+1}$ means that every element of $\PP_{i+1}$ is contained in some element of $\PP_i$. The elements $P\in\PP$ are non-empty intersections of the form $P=\cap_{n=1}^\infty P_n$, where $P_n\in\PP_n$, for all $n\in\N$.
\end{defi}

\begin{teo}[\textbf{Rokhlin's Disintegration}]
Suppose that $M$ is a separable complete space, $\mu$ a probability measure and $\PP$ is a measurable partition, then $\mu$ has a disintegration with respect to $\PP$.
\end{teo}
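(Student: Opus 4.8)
The plan is to build the conditional probabilities $\mu_P$ by a martingale limiting procedure along the sequence of countable partitions that defines $\PP$, and then to verify the three axioms of a disintegration using a countable generator of the Borel $\sigma$-algebra together with the Polish structure of $M$.

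First I would fix the combinatorial data. Since $M$ is separable metric, its Borel $\sigma$-algebra $\mathcal{B}$ is countably generated; I fix a countable algebra $\mathcal{A}=\{E_k:k\in\N\}$ of Borel sets that generates $\mathcal{B}$ and is closed under finite unions, intersections and complements, and I enlarge it, keeping it countable, so that it also contains every atom of every $\PP_n$. Let $M_0$ be the full-measure set on which $\PP=\bigvee_{n\ge1}\PP_n$ with $\PP_1\prec\PP_2\prec\cdots$ countable; for $x\in M_0$ let $P_n(x)\in\PP_n$ be the atom containing $x$, so that $P(x)=\bigcap_{n}P_n(x)$. Because each $\PP_n$ is countable, the set of $x$ with $\mu(P_n(x))>0$ for all $n$ has full $\mu$-measure, and on it I set $\mu_x^{(n)}(E)=\mu(E\cap P_n(x))/\mu(P_n(x))$ for $E\in\mathcal{B}$.

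The analytic heart of the argument is a martingale convergence. For each fixed $E_k$ the function $x\mapsto\mu_x^{(n)}(E_k)$ is a version of the conditional expectation $\mathbb{E}_\mu[\mathbf{1}_{E_k}\mid\sigma(\PP_n)]$, and since $\sigma(\PP_1)\subseteq\sigma(\PP_2)\subseteq\cdots$ the sequence $(\mu_x^{(n)}(E_k))_n$ is a uniformly bounded martingale, hence converges for $\mu$-a.e.\ $x$. Intersecting over the countably many indices $k$ produces a full-measure set $M_1\subseteq M_0$ on which $\nu_x(E_k):=\lim_n\mu_x^{(n)}(E_k)$ exists for every $k$; passing to the limit, $\nu_x$ is non-negative, finitely additive and normalized on the algebra $\mathcal{A}$ for every $x\in M_1$. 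The step I expect to be the main obstacle is promoting $\nu_x$ to a genuine Borel probability measure on $M$: here one uses that $\mu$ is tight (Ulam's theorem, valid because $M$ is Polish) to show that, for $\mu$-a.e.\ $x$, the finitely additive $\nu_x$ is continuous from above at $\emptyset$ along $\mathcal{A}$, so that its Carathéodory extension to $\sigma(\mathcal{A})=\mathcal{B}$ is countably additive. One then observes that $\nu_x$ depends only on the atom $P(x)$, since two points in the same atom have the same sequence $P_n(\cdot)$, which legitimizes writing $\mu_P:=\nu_x$ for $P=P(x)$. This extension-and-localization is precisely where completeness and separability of $M$ are used in an essential way and cannot be bypassed.

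It then remains to verify the three defining properties, each by a monotone-class argument starting from what holds on $\mathcal{A}$. For measurability: each $\mu_x^{(n)}(E_k)$ is $\sigma(\PP_n)$-measurable, so the pointwise limit $P\mapsto\mu_P(E_k)$ is $\hat{\mathcal{B}}$-measurable, and the collection of $E$ for which $P\mapsto\mu_P(E)$ is measurable is a monotone class containing $\mathcal{A}$, hence equals $\mathcal{B}$. For the integral identity: the tower property for countable partitions gives $\int\mu_x^{(n)}(E_k)\,d\mu(x)=\mu(E_k)$ for every $n$, and since $0\le\mu_x^{(n)}(E_k)\le1$ converges to $\mu_{P(x)}(E_k)$ $\mu$-a.e., dominated convergence together with the fact that $\hat\mu=\pi_\ast\mu$ yields $\int\mu_P(E_k)\,d\hat\mu(P)=\mu(E_k)$, which extends to all $E\in\mathcal{B}$ by monotone class. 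Finally, for $m\ge n$ one has $\mu_x^{(m)}(P_n(x))=1$, so $\nu_x(P_n(x))=1$; since $P_n(x)\in\mathcal{A}$ and $P(x)=\bigcap_n P_n(x)$ is a decreasing intersection, continuity from above of the (now countably additive) measure $\mu_{P(x)}$ forces $\mu_P(P)=1$ for $\hat\mu$-a.e.\ $P\in\PP$. Uniqueness of such a family is exactly the content of the preceding proposition, and the proof is complete.
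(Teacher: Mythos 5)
The paper itself contains no proof of this theorem: it is quoted as a classical result with \cite{Vi16} as reference, and the only trace of its proof that the paper keeps is the Observation that $\mu_P(A)=\lim_{n}\mu(P_n\cap A)/\mu(P_n)$. Your martingale construction produces exactly that object, so your route is the standard construction of regular conditional probabilities on a Polish space and is consistent with the way the theorem is used later in the paper; the passage from $\mu$-a.e.\ $x$ to $\hat{\mu}$-a.e.\ $P$ is harmless because every function you build is constant on atoms.

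The one place where your write-up, taken literally, does not work is the step you yourself flag as the main obstacle. The assertion ``for $\mu$-a.e.\ $x$ the finitely additive $\nu_x$ is continuous from above at $\emptyset$ along $\mathcal{A}$'' quantifies over the uncountably many decreasing sequences in the countable algebra $\mathcal{A}$, so it cannot be obtained by intersecting countably many full-measure sets, and the martingale limits only control countably many values of $\nu_x$. The standard repair uses tightness \emph{before} taking limits: for each $E_k\in\mathcal{A}$ and $j\in\N$ choose (Ulam) a compact $K_{k,j}\subseteq E_k$ with $\mu(E_k\setminus K_{k,j})<2^{-j}$, adjoin all the $K_{k,j}$ to $\mathcal{A}$ (the generated algebra stays countable), run the martingale argument on the enlarged algebra, and use $\int\bigl(\nu_x(E_k)-\nu_x(K_{k,j})\bigr)\,d\mu(x)\le\mu(E_k\setminus K_{k,j})$ to conclude that for a.e.\ $x$ one has the inner regularity $\nu_x(E_k)=\sup_j\nu_x(K_{k,j})$. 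Countable additivity of $\nu_x$ on the algebra then follows from the compact-class criterion (Marczewski), rather than from a direct continuity-from-above verification, and after that the Carath\'eodory extension, the monotone-class proofs of properties (2) and (3), and your continuity-from-above argument giving $\mu_P(P)=1$ all go through exactly as you wrote them. With that correction the proposal is a complete and correct proof.
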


For more details, see \cite{Vi16}.

\begin{obs} In the proof of Rokhlin's Disintegration Theorem, we have the explicit definition of the conditional measures: For almost everyl $P\in\PP$ and $P_n\in\PP_n$ such that $P=\cap_{n=1}^\infty P_n$, then $$\mu_P(A)=\lim_{n\to\infty}\frac{\mu(P_n\cap A)}{\mu(P_n)},$$ for all $A\subset M$ measurable.
\end{obs}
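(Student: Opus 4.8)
The plan is to recognize the quotient $\frac{\mu(P_n\cap A)}{\mu(P_n)}$ as a conditional expectation on the quotient probability space $(\PP,\hat{\mathcal{B}},\hat\mu)$ and then invoke the martingale convergence theorem for an increasing filtration. I would fix a measurable set $A\subseteq M$ and prove the displayed identity for $\hat\mu$-a.e. $P$ (the exceptional null set being allowed to depend on $A$; for a fixed $P$ the formula cannot hold for \emph{every} measurable $A$, as the case $A=\{P\}$ with atomic $\mu_P$ shows). First I would push the partitions $\PP_n$ down to $\PP$: for $P\in\PP$ let $P_n=P_n(P)$ be the element of $\PP_n$ with $P=\bigcap_n P_n$, and set $\hat P_n(P):=\pi(P_n(P))$. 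As $P$ varies these form a countable partition $\hat\PP_n$ of $\PP$ with elements in $\hat{\mathcal{B}}$ (since $\pi^{-1}(\hat P_n(P))=P_n(P)$ is measurable in $M$); because $\PP_n\prec\PP_{n+1}$ the $\hat\PP_n$ are increasing, and since $P=\bigcap_n P_n(P)$ one has $\bigcap_n\hat P_n(P)=\{P\}$, so $\bigvee_n\hat\PP_n$ separates the points of $\PP$. Write $\hat{\mathcal{A}}_n:=\sigma(\hat\PP_n)$ and $\hat{\mathcal{A}}_\infty:=\sigma\!\big(\bigcup_n\hat{\mathcal{A}}_n\big)$.

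The key step is the identity
\[
\mu\big(P_n(P)\cap A\big)=\int_{\hat P_n(P)}\mu_Q(A)\,d\hat\mu(Q),\qquad \mu\big(P_n(P)\big)=\hat\mu\big(\hat P_n(P)\big).
\]
To obtain it I would use that by property (1) of the disintegration $\mu_Q(Q)=1$ for $\hat\mu$-a.e. $Q$, and that any atom $Q\in\PP$ is either contained in $P_n(P)$ or disjoint from it; hence $\mu_Q(P_n(P)\cap A)=\mathbf{1}_{\hat P_n(P)}(Q)\,\mu_Q(A)$ for $\hat\mu$-a.e. $Q$, and integrating this via property (3) (with $E=P_n(P)\cap A$, then $E=P_n(P)$) gives the identity. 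For $\hat\mu$-a.e. $P$ the atom $\hat P_n(P)$ has positive $\hat\mu$-measure for every $n$ (the union of the zero-measure atoms of $\hat\PP_n$ is $\hat\mu$-null), and for such $P$ the identity says exactly that the ratio in the statement is the $\hat\mu$-average over $\hat P_n(P)$ of the bounded $\hat{\mathcal{B}}$-measurable function $g_A(Q):=\mu_Q(A)$ (measurable by property (2)), i.e.
\[
\frac{\mu(P_n(P)\cap A)}{\mu(P_n(P))}=E_{\hat\mu}\!\big[g_A\mid\hat{\mathcal{A}}_n\big](P).
\]
Since $(\hat{\mathcal{A}}_n)$ is an increasing filtration, the martingale convergence theorem then gives that $E_{\hat\mu}[g_A\mid\hat{\mathcal{A}}_n]$ converges $\hat\mu$-a.e. to $E_{\hat\mu}[g_A\mid\hat{\mathcal{A}}_\infty]$.

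The remaining point, which I expect to be the main obstacle, is to identify this limit with $g_A$ itself, i.e. with $\mu_P(A)$. This amounts to showing that $g_A$ is, modulo a $\hat\mu$-null set, $\hat{\mathcal{A}}_\infty$-measurable, equivalently that $\hat{\mathcal{B}}$ agrees mod $0$ with $\hat{\mathcal{A}}_\infty$; this is where the completeness and separability of $M$ are genuinely used, since for a standard probability space the measurable partition $\PP=\bigvee_n\PP_n$ and the sub-$\sigma$-algebra it generates determine each other mod $0$ (see \cite{Vi16}), so that $g_A$ is mod $0$ $\hat{\mathcal{A}}_\infty$-measurable and $E_{\hat\mu}[g_A\mid\hat{\mathcal{A}}_\infty]=g_A$ $\hat\mu$-a.e.; combining with the previous step proves the claim. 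An alternative that sidesteps this identification is to fix a countable algebra $\mathcal{C}$ generating $\mathcal{B}$ with $M\in\mathcal{C}$: off one $\hat\mu$-null set the limit $\nu_P(A):=\lim_n\mu(P_n(P)\cap A)/\mu(P_n(P))$ exists for all $A\in\mathcal{C}$ and is finitely additive with $\nu_P(M)=1$; using inner regularity by compact sets (valid since $M$ is Polish) one extends each $\nu_P$ to a Borel probability on $M$, verifies that $\{\nu_P\}$ satisfies (1)--(3), and concludes $\nu_P=\mu_P$ for $\hat\mu$-a.e. $P$ by the uniqueness Proposition, after which a monotone-class argument propagates the formula from $\mathcal{C}$ to all Borel sets for each fixed $A$.
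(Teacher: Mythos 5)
Your argument is correct, and it reaches the formula by a genuinely different route than the paper. The paper gives no independent argument: it simply points to the construction of the conditional measures inside the proof of Rokhlin's theorem (as in \cite{Vi16}), where $\mu_P$ is \emph{defined} as this limit on a countable generating algebra and then extended; your second, alternative sketch (limits on a countable algebra, extension by regularity, then the uniqueness proposition) is essentially that proof. Your primary route is different and has its own merit: you take an \emph{arbitrary} disintegration $\{\mu_Q\}$ as given, observe via properties (1)--(3) that
$$\frac{\mu(P_n(P)\cap A)}{\mu(P_n(P))}=E_{\hat\mu}\bigl[\,Q\mapsto\mu_Q(A)\;\big|\;\sigma(\hat\PP_n)\bigr](P),$$
and conclude by bounded martingale convergence; this derives the formula from the axiomatic properties of the disintegration rather than from its construction, and it does not even invoke the uniqueness proposition. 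The one load-bearing step you correctly isolate is the identification $E_{\hat\mu}[\,\mu_\cdot(A)\mid\hat{\mathcal{A}}_\infty]=\mu_\cdot(A)$ a.e., i.e.\ that the quotient $\sigma$-algebra $\hat{\mathcal{B}}$ agrees mod $0$ with the $\sigma$-algebra generated by the pushed-down partitions; this is exactly the Lebesgue-space (Rokhlin) fact that the factor of a standard space by a measurable partition is standard with the images of the $\PP_n$ as a basis, so citing it is legitimate, though it is of comparable depth to the construction the paper leans on. Finally, your reading of the statement is the right one: as literally written (a single null set of $P$ working for \emph{all} measurable $A$) the observation is false when some $\mu_P$ is atomic and $\mu$ is not, and the tenable version is per fixed $A$ (or for $A$ in a countable class), which is all the paper uses, since in the proof of the main theorem the formula is only applied to the countably many sets $\tilde{I}=I\times[0,1]$ with rational endpoints.
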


Now we are going to discuss some definitions and results of the absolute continuity of foliations.

\begin{defi} We say that a foliation $\F$ is absolutely  continuous if given any pair of smooth transversals to the foliation $\tau_1$ and $\tau_2$, the $\F$-holonomy $h_{\F}$ is absolutely continuous with respect to the induced Riemannian measures in the transversals $\lambda_{\tau_1}$ and $\lambda_{\tau_2}$, meaning that if $A\subset\tau_1$ and $\lambda_{\tau_1}(A)=0$, then $h_{\F*}\lambda_{\tau_2}(A)=0$.
\end{defi}

\begin{defi} We say that a foliation $\mathcal{F}$ is:
\begin{enumerate}
\item
\textit{Leafwise absolutely continuous I} if for any zero-set $A$ and for $m$-a.e. $p$, $\lambda_{\mathcal{F}_{p}}(A)=0$.
\item
\textit{Leafwise absolutely continuous II} if for any measurable set $A$ such that $\lambda_{\mathcal{F}_{p}}(A)=0$ for $m$-a.e. $p$, then $A$ is a zero-set for $m$.
\item
\textit{Leafwise absolutely continuous III} if $\mathcal{F}$ is both leafwise absolutely continuous I and leafwise absolutely continuous II
\end{enumerate}
\end{defi}

In terms of disintegration, we can say the following:

\begin{lemma}
\begin{enumerate}
\item
$\mathcal{F}$ is leafwise absolutely continuous I if and only if, for $m$-a.e. $p$, the measure $\lambda_{\mathcal{F}_{p}}$ is absolutely continuous with respect to the disintegration $m_p$.
\item
$\mathcal{F}$ is leafwise absolutely continuous II if and only if, for $m$-a.e. $p$, the disintegration $m_p$ is absolutely continuous with respect to $\lambda_{\mathcal{F}_{p}}$.
\item
$\mathcal{F}$ is leafwise absolutely continuous III if and only if, for $m$.a.e. $p$, the disintegration $m_p$ is equivalent to $\lambda_{\mathcal{F}_p}$.
\end{enumerate}
\end{lemma}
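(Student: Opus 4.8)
The plan is to collapse all three equivalences into a single statement about Rokhlin disintegrations, by averaging the leaf measures into one auxiliary probability on $M$. Work inside a foliation box (equivalently, for the model $M=[0,1]^2$ of this paper), so the leaves have finite Riemannian length $\ell(p):=\lambda_{\mathcal{F}_p}(M)\in[1,C_0]$, uniformly bounded. Set $\bar\lambda_{\mathcal{F}_p}:=\ell(p)^{-1}\lambda_{\mathcal{F}_p}$, a probability carried by the leaf through $p$; since $\ell(p)\in(0,\infty)$, replacing $\lambda_{\mathcal{F}_p}$ by $\bar\lambda_{\mathcal{F}_p}$ changes none of the three ``leafwise absolutely continuous'' properties nor any absolute-continuity relation between $\lambda_{\mathcal{F}_p}$ and $m_p$. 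Because $\mathcal{F}$ is continuous with $C^1$ leaves, $p\mapsto\bar\lambda_{\mathcal{F}_p}(E)$ is measurable for every Borel $E$, so
$$\Lambda(E):=\int\bar\lambda_{\mathcal{F}_p}(E)\,d\hat m(P)$$
defines a Borel probability on $M$ whose disintegration over $\PP$ is exactly $\{\bar\lambda_{\mathcal{F}_p}\}_{P\in\PP}$, and a one-line check gives that the quotient measure of $\Lambda$ is again $\hat m$.

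Next I would reformulate the three properties in terms of $m$ and $\Lambda$. Using only the disintegration identity $m(E)=\int m_p(E)\,d\hat m(P)$ together with the fact that $p\mapsto\lambda_{\mathcal{F}_p}(A)$ is non-negative and depends on $p$ only through its leaf --- so that ``$\lambda_{\mathcal{F}_p}(A)=0$ for $m$-a.e.\ $p$'' is the same as ``$\Lambda(A)=0$'' --- one sees that parts $(1)$, $(2)$, $(3)$ of the Lemma are respectively equivalent to $\Lambda\ll m$, to $m\ll\Lambda$, and to $m$ and $\Lambda$ being equivalent measures.

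The core step is the following lemma on conditional measures: if $\mu,\nu$ are Borel probabilities on $M$ that both disintegrate over the measurable partition $\PP$ \emph{with the same quotient measure}, then $\nu\ll\mu$ if and only if $\nu_p\ll\mu_p$ for a.e.\ $p$. The direction ``$\Leftarrow$'' is immediate from the disintegration formula and non-negativity. For ``$\Rightarrow$'', put $\varphi:=d\nu/d\mu$; computing the quotient measure of $\nu$ from the disintegration of $\mu$ shows $\int\varphi\,d\mu_p=1$ for a.e.\ $p$, hence $\{\varphi\,\mu_p\}_{P\in\PP}$ is a disintegration of $\nu$ over $\PP$, and by uniqueness of disintegration (the Proposition above, valid since the Borel $\sigma$-algebra of $M$ is countably generated) $\nu_p=\varphi\,\mu_p\ll\mu_p$ for a.e.\ $p$. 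Applying this with $(\mu,\nu)=(m,\Lambda)$ turns $\Lambda\ll m$ into ``$\lambda_{\mathcal{F}_p}\ll m_p$ for a.e.\ $p$'', which is part $(1)$; with $(\mu,\nu)=(\Lambda,m)$ it gives part $(2)$; and part $(3)$ follows by intersecting the two conull sets of leaves produced by $(1)$ and $(2)$.

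I expect the only genuine work to be the ``$\Rightarrow$'' half of the lemma --- in particular, checking that $\varphi\,\mu_p$ really is a disintegration of $\nu$ so that the uniqueness statement applies, which is precisely where the hypothesis that the two quotient measures coincide is used. Everything else --- the boundedness and measurability of $\ell$ and of $p\mapsto\bar\lambda_{\mathcal{F}_p}(E)$, and the identification of the quotient measure of $\Lambda$ --- is routine for a continuous foliation with $C^1$ leaves. (An alternative that avoids introducing $\Lambda$ is to perform a \emph{measurable} Lebesgue decomposition $\lambda_{\mathcal{F}_p}=\rho_p\,m_p+\sigma_p$ leaf by leaf and assemble the singular parts $\sigma_p$ into a single Borel set that is $m$-null but not $\lambda_{\mathcal{F}_p}$-null on a positive-measure set of leaves; this route requires a measurable-selection argument that the averaging above sidesteps.)
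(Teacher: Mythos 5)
The paper does not actually prove this lemma: it is quoted as a known result with a pointer to \cite{PVW07}, so there is no in-paper argument to compare yours against. That said, your strategy is sound and is essentially the standard proof of this statement. The reduction of the three leafwise properties to $\Lambda\ll m$, $m\ll\Lambda$, and $\Lambda\sim m$ is correct (the key point being that $\Lambda(A)=0$ is by construction the same as ``$\lambda_{\mathcal{F}_p}(A)=0$ for $m$-a.e.\ $p$'', since the integrand depends only on the leaf and $\hat m$ is the quotient of $m$). Your auxiliary lemma is the genuinely load-bearing step, and it does work as you describe: from $\hat\nu=\hat\mu$ one gets $\int\varphi\,d\mu_P=1$ for $\hat\mu$-a.e.\ $P$, so $\{\varphi\mu_P\}$ is a bona fide disintegration of $\nu$, and uniqueness of disintegrations (which applies because the Borel $\sigma$-algebra of $M$ is countably generated) yields $\nu_P=\varphi\mu_P\ll\mu_P$ on a single conull set of leaves. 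This is exactly what resolves the quantifier reversal between the definitions (``for each null set $A$, for a.e.\ $p$'') and the conclusions (``for a.e.\ $p$, for each $A$''), which is the only nontrivial content of the lemma. The remaining points you defer --- measurability of $P\mapsto\bar\lambda_{\mathcal{F}_P}(E)$ for Borel $E$ (open sets by continuity of the charts, then a monotone class argument) and of $P\mapsto\int_E\varphi\,d\mu_P$ --- are indeed routine in the paper's setting $M=[0,1]^2$, where a single foliation box covers everything. No gaps.
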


Another important lemma is the following:

\begin{lemma} $\mathcal{F}$ is leafwise absolutely continuous III if there exists a
transverse local foliation $\mathcal{T}$ to $\mathcal{F}$ such that $\mathcal{T}$ is absolutely continuous, and such that the  $\mathcal{F}$-holonomy between almost every pair of $\mathcal{T}$-leaves is absolutely continuous.
\end{lemma}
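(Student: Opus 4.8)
The plan is to localize in a chart adapted to both foliations and then transfer the good behaviour of $\mathcal{T}$, and of the $\mathcal{F}$-holonomies between $\mathcal{T}$-leaves, to the Rokhlin conditionals of $m$ along $\mathcal{F}$, by a Fubini-type identity together with uniqueness of disintegrations. Since both ``absolutely continuous'' and ``leafwise absolutely continuous III'' are local and countably additive, it suffices to cover $M$ by countably many open sets $B$ that are simultaneously foliation boxes for $\mathcal{F}$ and for $\mathcal{T}$, inside which the two foliations are transverse of complementary dimension, and to argue in one such $B$; fix a chart $\psi\colon B\to D\times S$ with $C^1$ plaque maps sending $\mathcal{F}$-plaques to the slices $D\times\{s\}$ and $\mathcal{T}$-plaques to the slices $\{a\}\times S$. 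In the coordinates $(a,s)$ two facts are immediate: the $\mathcal{F}$-holonomy between $\mathcal{T}$-leaves $\{a_1\}\times S$ and $\{a_2\}\times S$ is the identity on the $s$-coordinate, and the $\mathcal{T}$-holonomy between $\mathcal{F}$-leaves $D\times\{s_1\}$ and $D\times\{s_2\}$ is the identity on the $a$-coordinate.

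Next I would encode the two hypotheses as statements about measures on $D$ and $S$. Let $\nu_a$ be the push-forward to $S$, along the $s$-coordinate, of the leaf volume $\lambda_{T_a}$ of $T_a=\{a\}\times S$, and let $\omega_s$ be the push-forward to $D$, along the $a$-coordinate, of the leaf volume $\lambda_{F_s}$ of $F_s=D\times\{s\}$. Because the $\mathcal{F}$-holonomy between $\mathcal{T}$-leaves is the identity in the $s$-coordinate, absolute continuity of that holonomy between almost every pair of $\mathcal{T}$-leaves says exactly $\nu_{a_1}\sim\nu_{a_2}$ for almost every pair, and a Fubini argument produces a single measure class $[\nu]$ on $S$ with $\nu_a\in[\nu]$ for almost every $a$. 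Symmetrically, the $\mathcal{F}$-leaves are transversals to $\mathcal{T}$ and the $\mathcal{T}$-holonomy between them is the identity in the $a$-coordinate, so absolute continuity of $\mathcal{T}$ gives $\omega_{s_1}\sim\omega_{s_2}$ for all $s_1,s_2$, hence a class $[\omega]$ on $D$ with $\omega_s\in[\omega]$ for every $s$. I would also use absolute continuity of $\mathcal{T}$ in its strong form --- which, by the standard arguments underlying the lemmas above, is equivalent to saying that the Rokhlin disintegration $\{m^{\mathcal{T}}_a\}$ of $m|_B$ along $\mathcal{T}$ has conditionals $m^{\mathcal{T}}_a\sim\lambda_{T_a}$ for almost every $a$ and quotient measure $\sim\omega$ on $D$.

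Putting these together, in the coordinates $(a,s)$ the measure $m|_B$ takes the form $d(m|_B)(a,s)=g_a(s)\,d\nu(s)\,dq(a)$ with $q\sim\omega$ and $g_a>0$ $\nu$-almost everywhere, for almost every $a$; slicing an arbitrary null set along $S$ then shows $m|_B\sim\omega\times\nu$ as measures on $D\times S$. The product $\omega\times\nu$, disintegrated along the $\mathcal{F}$-foliation $\{D\times\{s\}\}$, has conditional $\omega$ on every slice and quotient $\nu$; since $m|_B\sim\omega\times\nu$ with an almost everywhere positive density, uniqueness of the Rokhlin disintegration forces the $\mathcal{F}$-conditionals $m_s$ of $m|_B$ to satisfy $m_s\sim\omega\sim\omega_s$, and pulling $\omega_s$ back through the $C^1$ plaque map gives $m_s\sim\lambda_{F_s}$ for almost every $s$. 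Re-assembling the countable cover yields $m_p\sim\lambda_{\mathcal{F}_p}$ for $m$-a.e.\ $p$, which by part (3) of the lemma relating leafwise absolute continuity to disintegration means $\mathcal{F}$ is leafwise absolutely continuous III.

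The main obstacle is exactly the two points where absolute continuity must be upgraded from a one-sided to a two-sided (equivalence) statement --- that the $\mathcal{T}$-disintegration of $m$ has conditionals equivalent to the leaf volumes and quotient equivalent to transversal volume, and that the measures $\nu_a$ lie in a common class --- since only then is $m|_B$ equivalent to an honest product, and only then does the dual disintegration along $\mathcal{F}$ produce conditionals equivalent to, rather than merely absolutely continuous with respect to, the leaf volumes. Carefully justifying the passage between the holonomy formulation and the disintegration formulation of absolute continuity of $\mathcal{T}$, and checking it against the mere $C^1$ regularity of the $\mathcal{F}$-leaves used as transversals, is the technical heart of the argument.
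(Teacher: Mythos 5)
The paper does not actually prove this lemma: it is quoted from \cite{PVW07} and the reader is referred there, so there is no internal proof to compare against. Your sketch reproduces, in outline, the standard argument from that reference: straighten both foliations in a common box, use the holonomy hypotheses to put $m|_B$ in the measure class of a product $\omega\times\nu$, and read off the $\mathcal{F}$-conditionals from the product by uniqueness of disintegrations. The local-to-global reduction, the Fubini argument producing a single class $[\nu]$, and the final slicing step are all sound (and the one-sided/two-sided worry largely evaporates, since ``holonomy between a.e.\ pair'' applied to both orderings of a pair already gives equivalence of the pushed-forward leaf volumes).

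The one substantive weakness is the step you yourself flag as the ``technical heart'': you invoke that absolute continuity of $\mathcal{T}$ (a statement about holonomies between smooth transversals) is equivalent to the $\mathcal{T}$-disintegration of $m|_B$ having conditionals $m^{\mathcal{T}}_a\sim\lambda_{T_a}$ and quotient $\sim\omega$. As phrased, that is precisely the Corollary of the lemma you are proving, applied to $\mathcal{T}$ in place of $\mathcal{F}$, so the argument is circular unless you break the loop. The standard way out is to first establish the disintegration statement for $\mathcal{T}$ directly by the classical Fubini computation: work in a \emph{smooth} chart $(u,v)$ in which $m$ is Lebesgue and the slices $\{u\}\times V$ are smooth transversals to $\mathcal{T}$, write $m(A)=\int_U\int_V\chi_A\,dv\,du$, and use the Radon--Nikodym derivatives (Jacobians) of the $\mathcal{T}$-holonomies between these smooth transversals to exchange the order of integration into an integral over $\mathcal{T}$-plaques with positive densities against leaf volume. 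Only after this bootstrap may one compare the resulting quotient measure with $\omega_s$ via the $\mathcal{T}$-holonomy onto the $C^1$ $\mathcal{F}$-plaques --- which is also where your second caveat (smooth versus $C^1$ transversals in the definition of absolute continuity) has to be discharged; in the setting of this paper it is harmless because the transversals used are the vertical lines, which are smooth. With that step supplied, the sketch becomes a complete proof.
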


This lemma implies an important corollary, that assures us the non-absolutely continuity of generic foliations in the result of this paper.

\begin{cor} If $\mathcal{F}$ is absolutely continuous, then $\mathcal{F}$ is leafwise absolutely continuous III.
\end{cor}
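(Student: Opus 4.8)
The plan is to deduce the corollary immediately from the preceding lemma, by exhibiting near (almost) every point a smooth transverse local foliation whose leaves can play the role of the smooth transversals in the definition of absolute continuity of $\mathcal{F}$. First I would fix $p\in M$ and a foliation box around it given by a chart $\Phi$ as in the definition. Since the $\mathcal{F}$-leaves are $C^1$ and their tangent spaces depend continuously on the transverse parameter, after shrinking the box and performing a linear change of coordinates I may assume that all local leaves are graphs of $C^1$ maps over a fixed $k$-dimensional coordinate subspace. Then the family $\mathcal{T}$ of $(d-k)$-dimensional coordinate planes $\{\,x_1=c_1,\dots,x_k=c_k\,\}$ inside the box is a genuine $C^{\infty}$ foliation, and by the graph property each of its leaves is transverse to every local leaf of $\mathcal{F}$; thus $\mathcal{T}$ is a transverse local foliation to $\mathcal{F}$.

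Next I would check the two hypotheses of the lemma for this $\mathcal{T}$. As $\mathcal{T}$ is a $C^{\infty}$ foliation, its holonomy maps between smooth transversals are $C^{\infty}$ (in these coordinates they are restrictions of affine projections), hence absolutely continuous with respect to the induced Riemannian measures, so $\mathcal{T}$ is absolutely continuous. For the second hypothesis, every leaf of $\mathcal{T}$ is by construction a smooth $(d-k)$-dimensional submanifold transverse to $\mathcal{F}$, i.e. a smooth transversal in the sense used for $\mathcal{F}$; therefore, because $\mathcal{F}$ is absolutely continuous, the $\mathcal{F}$-holonomy between \emph{any} ordered pair of $\mathcal{T}$-leaves is absolutely continuous, in particular between almost every pair. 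Applying the previous lemma locally, covering $M$ up to a null set by countably many such boxes and noting that leafwise absolute continuity III can be verified box by box, yields that $\mathcal{F}$ is leafwise absolutely continuous III.

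The main obstacle, and really the only point requiring care, is the construction of $\mathcal{T}$ together with the verification that its leaves genuinely qualify as the smooth transversals appearing in the definition of absolute continuity: one must take the box small enough that the continuously varying $C^1$ leaves are uniformly transverse to the coordinate fibers, and one must confirm that the reduction to graphs does not destroy the transversality of those fibers. Once this local geometry is set up, the two hypotheses of the lemma become automatic and the corollary follows by direct citation, with no substantial estimates involved.
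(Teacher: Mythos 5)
Your argument is correct and is exactly the route the paper intends: the paper gives no written proof, simply asserting that the preceding lemma implies the corollary (deferring details to the cited reference), and your construction of a smooth transverse coordinate foliation $\mathcal{T}$ in a foliation box, followed by the observation that its leaves are smooth transversals so the $\mathcal{F}$-holonomies between them are absolutely continuous by hypothesis, is precisely how that implication is filled in.
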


For more details of these definitions and results see \cite{PVW07}.

\newpage

\section{The space of foliations $\F_{(C,\beta)}$}{\label{spa}}

\ \
\ \

\begin{obs}
\begin{enumerate}
\item
If $\beta'<\beta$, then $\F_{(C,\beta)}\subseteq\F_{(C,\beta')}$.
\item
If $C'<C$, then $\F_{(C',\beta)}\subseteq\F_{(C,\beta)}$.
\end{enumerate}
\end{obs}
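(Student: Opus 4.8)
The plan is to verify both inclusions directly from the definition of $\F_{(C,\beta)}$, checking the five defining requirements one at a time. The first step is to observe that conditions $(1.1)$--$(1.4)$ — that $f$ be $C^1$ uniformly in the first variable and that $f(0,y)=y$, $f(x,0)=0$, $f(x,1)=1$ — make no reference to the constants $C$ or $\beta$, so any $f$ satisfying them for one choice of parameters satisfies them for every choice. Thus in both items the whole argument reduces to the bi-H\"older estimate $(1.5)$, and the one elementary fact I would use is that for $t\in(0,1]$ the map $s\mapsto t^{s}=e^{s\log t}$ is non-increasing in $s$, together with the trivial remark that every increment $t:=|y_2-y_1|$ with $y_1,y_2\in[0,1]$ lies in $(0,1]$.

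For item $(1)$, fix $f\in\F_{(C,\beta)}$, a point $x\in[0,1]$ and $y_1\ne y_2$ in $[0,1]$, and write $t=|y_2-y_1|\in(0,1]$. Since $\beta'<\beta<1$ we get $t^{\beta}\le t^{\beta'}$, and since $1/\beta'>1/\beta$ we get $t^{1/\beta'}\le t^{1/\beta}$. Feeding these into the hypothesis $C^{-1}t^{1/\beta}\le|f(x,y_2)-f(x,y_1)|\le Ct^{\beta}$ yields
$$C^{-1}t^{1/\beta'}\le C^{-1}t^{1/\beta}\le|f(x,y_2)-f(x,y_1)|\le Ct^{\beta}\le Ct^{\beta'},$$
which is precisely the $(C,\beta')$-bi-H\"older condition in the second variable; since $(1.1)$--$(1.4)$ also hold for $f$, we conclude $f\in\F_{(C,\beta')}$. (Here $\F_{(C,\beta')}$ is a legitimate member of the family since $\beta'<\beta<1$; the metric $d_\alpha$ plays no role in the set inclusion.)

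For item $(2)$, fix $f\in\F_{(C',\beta)}$ and keep the same notation. From $1<C'<C$ we get $C^{-1}<(C')^{-1}$, so
$$C^{-1}t^{1/\beta}\le(C')^{-1}t^{1/\beta}\le|f(x,y_2)-f(x,y_1)|\le C't^{\beta}\le Ct^{\beta},$$
and again conditions $(1.1)$--$(1.4)$ are untouched, so $f\in\F_{(C,\beta)}$.

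I do not expect any genuine obstacle: the entire content is the monotonicity of $s\mapsto t^{s}$ on the unit interval and the boundedness of the increments $|y_2-y_1|$. The only subtlety worth flagging explicitly is the \emph{direction} of the inclusions — one might initially guess the reverse — and the computation above makes clear why raising the H\"older exponent $\beta$ shrinks the class while raising the distortion constant $C$ enlarges it.
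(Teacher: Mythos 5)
Your verification is correct and is exactly the routine argument the paper leaves implicit (the observation is stated without proof): conditions (1.1)--(1.4) are parameter-free, and the bi-H\"older inequalities follow from the monotonicity of $t\mapsto t^{s}$ for $t\in(0,1]$ together with $C^{-1}<(C')^{-1}$ and $C'<C$. Nothing further is needed.
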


The first thing is to have a convenient topology for the space $\F_{(C,\beta)}$.

\begin{lemma} $(\F_{(C,\beta)},d_{\alpha}(\cdot,\cdot))$ is a complete metric space.
\end{lemma}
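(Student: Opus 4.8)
The plan is to verify the metric space axioms and then establish completeness by showing that a $d_\alpha$-Cauchy sequence converges to a limit which still lies in $\F_{(C,\beta)}$.

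First I would check that $d_\alpha$ is a genuine metric. Symmetry and the triangle inequality are immediate from the corresponding properties of the norms $\|\cdot\|_{C^0}$ and $\|\cdot\|_\alpha$ and from the fact that these are combined additively. For positive-definiteness, note that $d_\alpha(f,g)=0$ forces $\|f-g\|_{C^0}=0$, hence $f=g$; the remaining terms are then automatically zero. (One should also remark that $d_\alpha$ is finite on $\F_{(C,\beta)}$: each $h^f_{0,x}=f(x,\cdot)$ is $(C,\beta)$-bi-H\"older with $0<\alpha<\beta$, so the difference of two such maps is $\alpha$-H\"older with finite norm, using that on $[0,1]$ a function that is $\beta$-H\"older is also $\alpha$-H\"older.)

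Next, the completeness argument. Let $(f_k)$ be a $d_\alpha$-Cauchy sequence in $\F_{(C,\beta)}$. The first two terms of $d_\alpha$ say that $(f_k)$ and $(\partial f_k/\partial x)$ are uniformly Cauchy in $C^0(M)$; by completeness of $(C^0(M),\|\cdot\|_{C^0})$ they converge uniformly to continuous functions $f$ and $g$, and the standard theorem on uniform convergence of derivatives gives $\partial f/\partial x=g$, so $f$ is $C^1$ in the first variable, uniformly, i.e.\ condition (3.1) holds. Conditions (3.2)--(3.4) are closed under pointwise (hence uniform) limits, so they pass to $f$. The third term of $d_\alpha$ gives that $x\mapsto h^{f_k}_{0,x}$ is Cauchy in $\sup_x\|\cdot\|_\alpha$, so the holonomies converge; but $h^{f_k}_{0,x}=f_k(x,\cdot)\to f(x,\cdot)$ pointwise already, which identifies the limit, and it also shows the convergence $h^{f_k}_{x,0}\to f(x,\cdot)^{-1}$ forces $f(x,\cdot)$ to remain a homeomorphism of $[0,1]$ for each $x$.

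The main obstacle is checking that the limit $f$ still satisfies the bi-H\"older condition (3.5), i.e.\ that $\F_{(C,\beta)}$ is closed, not merely that the metric completion exists. The upper H\"older bound $|f(x,y_2)-f(x,y_1)|\le C|y_2-y_1|^\beta$ is preserved under pointwise limits, so it survives. The lower bound $C^{-1}|y_2-y_1|^{1/\beta}\le |f(x,y_2)-f(x,y_1)|$ is the delicate one: pointwise limits of functions satisfying a strict lower bound could in principle degenerate, but here the inequality is non-strict ($\ge$) with a fixed constant $C$ and exponent $1/\beta$, so for fixed $x,y_1,y_2$ we have $|f_k(x,y_2)-f_k(x,y_1)|\ge C^{-1}|y_2-y_1|^{1/\beta}$ for every $k$, and passing to the limit preserves the inequality. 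Thus $f\in\F_{(C,\beta)}$. Finally, $d_\alpha(f_k,f)\to 0$ follows by splitting: the $C^0$ and $C^1$-in-$x$ parts go to zero by uniform convergence, and for the holonomy part one uses that a Cauchy sequence in the Banach space of $\alpha$-H\"older functions converges in $\alpha$-H\"older norm to its pointwise limit, applied uniformly in $x$ (the uniformity in $x$ requiring a short $\varepsilon/3$ argument combining the Cauchy estimate with pointwise convergence). I would expect the write-up to spend most of its effort making the "uniformly in $x$" bookkeeping for the holonomy term precise; everything else is routine.
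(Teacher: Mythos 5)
Your proposal is correct and follows essentially the same route as the paper: take a $d_\alpha$-Cauchy sequence, obtain the limit of each component ($C^0$, $\partial/\partial x$, and the holonomies via completeness of the $\alpha$-H\"older space), and check that the bi-H\"older condition survives in the limit. The only cosmetic differences are that the paper organizes this as ``$\F_{(C,\alpha)}$ is complete and $\F_{(C,\beta)}$ is closed in it'' and verifies the $(C,\beta)$-bounds through the $\alpha$-H\"older estimate, whereas you verify them directly from pointwise convergence of the non-strict inequalities --- which is, if anything, slightly cleaner.
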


\begin{proof}
In first instance, we are going to prove that $(\F_{(C,\alpha)},d_{\alpha}(\cdot,\cdot))$ is a complete metric space, and then prove that $\F_{(C,\beta)}$ is a closed subspace of $\F_{(C,\alpha)}$. Let $\{f_n\}\subseteq\F_{(C,\alpha)}$ be a Cauchy sequence. So, for all $\varepsilon>0$, there exists $n_0\in\N$ such that for all $n,m\geq n_0$, 
$$d_{\alpha}(f_n,f_m)=\|f_n-f_m\|_{C^0}+\left\|\frac{\partial f_n}{\partial x}-\frac{\partial f_m}{\partial x}\right\|_{C^0}+\sup_{x\in[0,1]}\{\|h_{0,x}^{f_n}-h_{0,x}^{f_m}\|_{\alpha}\},\|h_{x,0}^{f_n}-h_{x,0}^{f_m}\|_{\alpha}\}<\varepsilon.$$ 
Since $\left\{f_n\right\},\left\{\ds\frac{\partial f_n}{\partial x}\right\}$ are Cauchy sequences, this implies that $f_n\to f$ and $\ds\frac{\partial f_n}{\partial x}\to\frac{\partial f}{\partial x}$ in the $C^0$ topology, for some $f$. Now, considering the fact that the space of functions $(C,\alpha)$-H\"{o}lder in $[0,1]$ is a Banach space, then we have that $h_{0,x}^{f_n}=f_n(x,\cdot)\to f(x,\cdot)$ in the $C^{\alpha}$ topology and $f(x,\cdot)$ is $(C,\alpha)$-H\"{o}lder in the second variable. For $h_{x,0}^{f_n}$ we have something similar. Thus, $f_n\to f$ in the $d_{\alpha}$ topology and $f\in\F_{(C,\alpha)}$, so, $\F_{(C,\alpha)}$ is a complete metric space. The final step is prove that $\F_{(C,\beta)}$ is closed in $\F_{(C,\alpha)}$ in the $d_\alpha$-topology. For this, take $\{f_n\}\subset\F_{(C,\beta)}$ such that $f_n\to f$ in the $d_\alpha$-topology. Now, the only thing to check is that $f$ is $(C,\beta)$-bi-H\"{o}lder in the second variable. If $f_n\to f$ in the $d_\alpha$-topology, then for all $\varepsilon>0$, there exists $N\in\N$ such that for $n\geq N$, $$d_{\alpha}(f,f_n)=\|f-f_n\|_{C^0}+\left\|\frac{\partial f}{\partial x}-\frac{\partial f_n}{\partial x}\right\|_{C^0}+\sup_{x\in[0,1]}\{\|h_{0,x}^{f}-h_{0,x}^{f_n}\|_{\alpha}\},\|h_{x,0}^{f}-h_{x,0}^{f_n}\|_{\alpha}\}<\varepsilon.$$ In the last term, we have that $\|h_{0,x}^f-h_{0,x}^{f_n}\|_{\alpha}<\varepsilon$, and this implies that $$|f(x,y_2)-f_n(x,y_2)-(f(x,y_1)-f_n(x,y_1))|<\varepsilon|y_2-y_1|^\alpha,$$ for all $y_1,y_2\in[0,1]$, with $y_1\neq y_2$. Then,

\begin{align*}
|f(x,y_2)-f(x,y_1)|&\leq|f(x,y_2)-f_n(x,y_2)-(f(x,y_1)-f_n(x,y_1))| \\
&\mbox{ \ \ }+|f_n(x,y_2)-f_n(y_1)| \\
&<\varepsilon|y_2-y_1|^\alpha+C|y_2-y_1|^\beta.
\end{align*}

Take $\varepsilon>0$, so $|f(x,y_2)-f(x,y_1)|\leq C|y_2-y_1|^\beta$. Is very similar to see that $|f(x,y_2)-f(x,y_1)|\geq C^{-1}|y_2-y_1|^{1/\beta}$. Thus, $f\in\F_{(C,\beta)}$.
\end{proof}

Since a complete metric space is a Baire space, we have that $\F_{(C,\beta)}$ is a Baire space, so the countable intersection of open and dense sets are dense, and such a set is called a residual.

\ \ 
\ \

\section{Main Proposition}{\label{pro}}

\ \
\ \

Observe that if we consider $f\in\F_{(C,\beta)}$ and $n\in\N$, then we can define (finite) partitions $\PP_{f,n}$ for $M$: $P_i\in \PP_{f,n}$ if and only if $$P_i=\left\{(x,y)\in M:y\in\left[f\left(x,\ds\frac{i}{2^n}\right),f\left(x,\ds\frac{i+1}{2^n}\right)\right)\right\},$$ where $i=0,\ldots,2^n-1$. So, for each $f\in\F_{(C,\beta)}$, we can define measurable partitions for $M$:
 $$\PP_{f}=\bigvee_{n=1}^\infty\PP_{f,n}.$$

Now, let $m\in\N$, $I=[b_1,b_2]\subset[0,1]$ with $b_1,b_2\in\Q$ and $\tilde{I}=I\times[0,1]$. Define the sets $A_{n,m,I}$: 
$$A_{n,m,I}=\left\{f\in\F_{(C,\beta)}:\frac{\mu(P\cap \tilde{I})}{\mu(P)}<\frac{1}{m}\vee\frac{\mu(P\cap \tilde{I})}{\mu(P)}>1-\frac{1}{m},\mbox{ for all }P\in \PP_{f,n}\right\}.$$ 
Clearly, $A_{n,m,I}$ are open, so we can define the sets $B_{m,I}=\bigcup_{n\in\N}A_{n,m,I}$.

\begin{prop} The sets $B_{m,I}$ are open and dense.
\end{prop}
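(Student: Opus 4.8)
# Proof Proposal

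The plan is to prove openness first (it is essentially immediate from the definitions and the continuity of $f \mapsto \PP_{f,n}$), and then concentrate all effort on density, which is the substantive content. Openness: fix $n$, $m$, $I$. If $f \in A_{n,m,I}$, then for each of the finitely many pieces $P \in \PP_{f,n}$ the quantity $\mu(P \cap \tilde I)/\mu(P)$ lies in the open set $[0,1/m) \cup (1-1/m,1]$. Since $P$ is bounded by the graphs of $f(\cdot, i/2^n)$ and $f(\cdot,(i+1)/2^n)$, the function $g \mapsto \mu(P_g \cap \tilde I)$ and $g \mapsto \mu(P_g)$ depend continuously on $g$ in the $d_\alpha$-topology (indeed already in the $C^0$-topology on $g$, since $\mu(P_g)=\int_I \bigl(g(x,(i+1)/2^n)-g(x,i/2^n)\bigr)\,dx$ and similarly with $I$ replaced by its intersection with a vertical strip); moreover $\mu(P_g)$ is bounded below uniformly by the bi-H\"older lower bound, so the ratio is continuous. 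Hence a small $d_\alpha$-ball around $f$ stays in $A_{n,m,I}$, and $B_{m,I} = \bigcup_n A_{n,m,I}$ is open as a union of open sets.

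For density, fix $f \in \F_{(C,\beta)}$, $\varepsilon > 0$, $m$, and $I = [b_1,b_2]$. I want to produce $g \in B_{m,I}$ with $d_\alpha(f,g) < \varepsilon$, i.e.\ $g \in A_{n,m,I}$ for some large $n$. The idea is to perturb $f$ only in the $y$-direction and only inside the strip $\tilde I$, in a way that makes each thin horizontal layer $P \in \PP_{g,n}$ (for $n$ large) almost entirely inside $\tilde I$ or almost entirely outside it — but wait, that is impossible for a layer that genuinely straddles $I$ in the $x$-direction, so the correct reading is that we must make each leaf $y \mapsto$ graph of $g(\cdot,y)$ behave so that, measured against the Lebesgue measure $\mu$ in the box $\tilde I$, the conditional mass concentrates. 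The cleaner route: on the strip $\tilde I = I \times [0,1]$, replace $f$ by a map $g$ whose holonomy from $x=b_1$ across $I$ is a fixed very ``steep then flat'' bi-H\"older homeomorphism $\varphi$ of $[0,1]$ — for instance a Cantor-like staircase conjugate, close to the identity in $C^\alpha$ but such that $\varphi$ maps a set of small Lebesgue measure onto a set of measure close to $1$. Concretely: choose $\varphi:[0,1]\to[0,1]$ increasing, $(C,\beta)$-bi-H\"older, with $\|\varphi - \mathrm{id}\|_{C^\alpha} < \varepsilon/2$ (possible since $\alpha < \beta$ gives room), and with a threshold $t^* $ such that $\varphi([0,t^*])$ has Lebesgue measure $> 1 - 1/(2m)$ while $t^* < 1/(2m)$. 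Then define $g$ on $\tilde I$ by interpolating, as $x$ runs over $I$, between $g(b_1,\cdot) = f(b_1,\cdot)$ and $g(b_2,\cdot) = \varphi \circ f(b_1,\cdot)$ (or rather the appropriate composition making holonomies match at the endpoints of $I$), extended by $f$ outside $\tilde I$, with a $C^1$-in-$x$ cutoff so that $g$ agrees with $f$ near $x = b_1, b_2$ and the $\partial_x$-term of $d_\alpha$ stays below $\varepsilon/2$.

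With such a $g$, for each layer index $i$ and each $x \in I$ the vertical interval $[g(x, i/2^n), g(x,(i+1)/2^n)]$ is the $\varphi_x$-image (for the interpolated $\varphi_x$) of a level-$n$ dyadic interval; by the concentration property of $\varphi$, for all but at most $O(2^n/m)$ worth of such intervals the image has length within the target tolerance, and one checks that $\mu(P \cap \tilde I)/\mu(P) = \bigl(\int_I (\cdots)\,dx\bigr) / \bigl(\int_0^1 (\cdots)\,dx\bigr)$ is forced into $[0,1/m) \cup (1-1/m,1]$ once $n$ is large, because the numerator restricted to $I$ either captures almost all of the layer's vertical thickness (when the layer sits in the ``steep'' region of $\varphi$) or almost none of it (the ``flat'' region). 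The main obstacle — and the place the argument must be most careful — is the tension between two H\"older constraints: the perturbation $g$ must stay $(C,\beta)$-bi-H\"older in $y$ (a \emph{closed} condition with fixed constant $C$, not $C+\varepsilon$), yet must be wild enough to produce the mass concentration; resolving this requires exploiting that $f$ is $C^1$ in $x$ with $f(0,\cdot) = \mathrm{id}$ so there is ``slack'' in the bi-H\"older bound at interior points, or else choosing $\varphi$ with bi-H\"older constant strictly less than $C$ and absorbing the composition estimate. The second delicate point is making the $x$-interpolation and cutoff simultaneously keep $\|\partial_x g - \partial_x f\|_{C^0}$ small while the holonomy still sweeps through the full distorting map $\varphi$ across the rational-width interval $I$; this is a routine but careful smoothing argument.
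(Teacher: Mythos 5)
Your openness argument is fine (and more explicit than the paper's, which simply asserts it). The density argument, however, has two genuine gaps. First, the distorting map $\varphi$ you describe cannot exist as stated: if $\varphi$ is an increasing homeomorphism of $[0,1]$ with $\varphi(0)=0$ and $\varphi([0,t^*])$ has measure $>1-\tfrac{1}{2m}$ while $t^*<\tfrac{1}{2m}$, then $\varphi(t^*)-t^*>1-\tfrac1m$, so $\|\varphi-\mathrm{id}\|_{C^0}$ is of order $1$, contradicting $\|\varphi-\mathrm{id}\|_{C^\alpha}<\varepsilon/2$. Reading you charitably as intending a Cantor-type map that sends a \emph{spread-out} small set onto a set of large measure does not rescue the argument, because of the second, more serious gap: $A_{n,m,I}$ is a ``for all $P\in\PP_{g,n}$'' condition, and a single fixed monotone distortion necessarily has transition layers on which the expansion factor is moderate, so that $\mu(P\cap\tilde I)/\mu(P)$ lands in $[\tfrac1m,1-\tfrac1m]$. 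Your own phrase ``for all but at most $O(2^n/m)$ worth of such intervals'' concedes exactly the exceptional set that the definition of $A_{n,m,I}$ does not allow.

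The paper avoids this by making the distortion \emph{adapted to the level-$n$ dyadic grid}: after first convolving $f$ in $y$ to get a function that is bi-Lipschitz in $y$ (constant $L_3$), and then interpolating with the identity to shrink the bi-H\"older constant to some $C_\varepsilon<C$ (creating the slack you only gesture at), it moves each intermediate leaf $y=(2a+1)/2^n$ toward the top of its block for $x\in I$ and toward the bottom for $x\notin I$. Thus \emph{every} partition element at level $n$ is compressed to a $\delta_2$-fraction on one side of $I$ and expanded to a $(1-\delta_2)$-fraction on the other, which forces the ratio to within $1/m$ of $0$ or $1$ for all $P$ simultaneously; and the $C^\alpha$-closeness of the holonomies comes not from the distortion being mild but from its taking place at scale $2^{-n}$ (displacements of order $L_3 2^{-n}$ divided by $|y_2-y_1|^\alpha$ are small for $n$ large). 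To repair your proof you would need to replace the single-threshold $\varphi$ by such a grid-adapted alternating compression/expansion, and to carry out, rather than merely flag, the preliminary regularization that yields both the Lipschitz control on layer thicknesses and the strict room in the bi-H\"older constant.
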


\begin{proof} It is clear that $B_{m,I} $ are open sets. Now we are going to prove the proposition in several steps. Let $f\in\F_{(C,\beta)}$ and $\xi>0$.

\begin{lemma} \label{lema2} There exists $f_2\in\F_{(C,\beta)}$ such that $f_2$ is $C^2$ in the second variable and $d_\alpha(f_2,f)<\xi/3$.
\end{lemma}

\begin{proof} The first thing to do is extend $f$ in the second variable; for this, take any $r>0$ such that $$r\leq\min\left\{\left(\ds\frac{\xi}{12C}\right)^{1/\beta},\left(\ds\frac{\xi}{24C}\right)^{1/(\beta-\alpha)},\ds\frac{\xi}{24}\cdot\left(\ds\frac{\xi}{12C^\beta}\right)^{\alpha/(\beta-\alpha)}\right\} $$ and $C_r\left(\ds\frac{\partial f}{\partial x}\right)\leq\ds\xi/12$, where $C_r(h)=\ds\sup_{|x-y|<r}\{|h(x)-h(y)|\}$, and define the function $f_1:[0,1]\times(-r,1+r)\to[0,1]$ such that 

$$f_1(x,y)= \left\{ \begin{array}{ll}
-f(x,y), & \mbox{ if } y\in(-r,0] \\
    \\
    f(x,y), & \mbox{ if } y\in[0,1] \\
    \\
   1-f(x,1-y), & \mbox{ if } y\in[1,1+r).
             \end{array}
   \right.$$
 
With this function, we can define a function $C^2$ in the second variable through the convolution, such that is close to $f$ in the $d_\alpha$-topology: let $\phi_r:\R\to\R^+$ a bump function such that $\phi_r(x)=0$, for all $x\notin(-r,r)$ and $\int_\R\phi_r(t)dt=1$, so define $$f_2(x,y)=\int_{-r}^{r}f_1(x,y-t)\phi_r(t)dt.$$ 

In fact, $f_2\in\F_{(C,\beta)}$: if $y_1,y_2\in[0,1]$ with $y_2>y_1$, then 

\begin{align*}
|f_2(x,y_2)-f_2(x,y_1)|&=\left|\int_{-r}^r(f_1(x,y_2-t)-f_1(x,y_1-t))\phi_r(t)dt\right| \\
&\leq\int_{-r}^r C|y_2-y_1|^\beta\phi_r(t)dt \\
&=C|y_2-y_1|^\beta.
\end{align*}

It is the same to see that $|f_2(x,y_2)-f_2(x,y_1)|\geq C^{-1}|y_2-y_1|^{1/\beta}$, and the other conditions are trivial, thus $f_2\in\F_{(C,\beta)}$. Now, we have to check that $d_\alpha(f_2,f)<\xi/3$.

\begin{enumerate}
\item
If $(x,y)\in M$, then 

\begin{align*}
|f_2(x,y)-f(x,y)|&=\left|\int_{-r}^r(f_1(x,y-t)-f_1(x,y))\phi_r(t)dt\right| \\
&\leq\left|\int_{-r}^r C|t|^\beta\phi_r(t)dt\right| \\
&\leq Cr^\beta \\
&\leq C\cdot\left(\left(\frac{\xi}{12C}\right)^{1/\beta}\right)^\beta \\
&=\frac{\xi}{12}
\end{align*}

So, $\|f_2-f\|_{C^0}\leq\xi/12$.

\ \

\item
If $(x,y)\in M$, then 

\begin{align*}
\left|\frac{\partial f_2}{\partial x}(x,y)-\frac{\partial f}{\partial x}(x,y)\right|&=\left|\int_{-r}^r\left(\frac{\partial f_1}{\partial x}(x,y-t)-\frac{\partial f_1}{\partial x}(x,y)\right)\phi_r(t)dt\right| \\
&\leq C_{r}\left(\frac{\partial f}{\partial x}\right) \\
&\leq\frac{\xi}{12}.
\end{align*}

So, $\left\|\ds\frac{\partial f_2}{\partial x}-\ds\frac{\partial f}{\partial x}\right\|_{C^0}\leq \xi/12$.

\ \

\item
Notice that $h_{0,x}^f(y)=f(x,y)$, so in a first case, if $y_1,y_2\in[0,1]$ with $y_2>y_1$ such that $|y_2-y_1|<\left(\ds\frac{\xi}{24C}\right)^{1/(\beta-\alpha)}$, then 

\begin{align*}
|h_{0,x}^{f_2}(y_2)-h_{0,x}^{f}(y_2)-h_{0,x}^{f_2}(y_1)+h_{0,x}^{f}(y_1)|&=|f_2(x,y_2)-f(x,y_2)-f_2(x,y_1)+f(x,y_1)| \\
&=\left|\int_{-r}^r\bigl[(f_1(x,y_2-t)-f_1(x,y_1-t))\right. \\
&\mbox{ \ \ \ }\Bigl.-(f_1(x,y_2)-f_1(x,y_1))\bigr]\phi_r(t)dt\Bigr| \\
&\leq\int_{-r}^r 2C|y_2-y_1|^\beta \phi_r(t)dt \\
&=2C|y_2-y_1|^{\beta-\alpha}|y_2-y_1|^\alpha \\
&<2C\left(\left(\frac{\xi}{24C}\right)^{1/(\beta-\alpha)}\right)^{\beta-\alpha}\cdot|y_2-y_1|^\alpha \\
&=\frac{\xi}{12C}|y_2-y_1|^\alpha.
\end{align*}

Now, if $|y_2-y_1|\geq \left(\ds\frac{\xi}{24C}\right)^{1/(\beta-\alpha)}$, then 

\begin{align*}
|h_{0,x}^{f_2}(y_2)-h_{0,x}^{f}(y_2)-h_{0,x}^{f_2}(y_1)+h_{0,x}^{f}(y_1)|&=|f_2(x,y_2)-f(x,y_2)-f_2(x,y_1)+f(x,y_1)| \\
&=\left|\int_{-r}^r\bigl[(f_1(x,y_2-t)-f_1(x,y_2))\right. \\
&\mbox{ \ \ \ }\Bigl.-(f_1(x,y_1-t)-f_1(x,y_1))\bigr]\phi_r(t)dt\Bigr|\\
&\leq\left|\int_{-r}^r(C|t|^\beta+C|t|^\beta)\phi_r(t)dt\right| \\
&\leq 2Cr^\beta \\
&\leq 2C\left(\frac{\xi}{24C}\right)^{\beta/(\beta-\alpha)} \\
&=2C\left(\frac{\xi}{24C}\right)\cdot\left(\frac{\xi}{24C}\right)^{\alpha/(\beta-\alpha)} \\
&\leq\frac{\xi}{12}|y_2-y_1|^\alpha.
\end{align*}

Therefore, $\|h_{0,x}^{f_2}-h_{0,x}^f\|_{\alpha}\leq\ds\frac{\xi}{12}$.

\item
Define $h_{x,0}^{f_2}=f_2(x,\cdot)^{-1}(y):=y^{f_2}$. Now, if $0<y_2-y_1<\left(\ds\frac{\xi}{12C^\beta}\right)^{1/(\beta-\alpha)}$, then 

\begin{align*}
|(h_{x,0}^{f_2}-h_{x,0}^f)(y_2-y_1)|&=|y_2^{f_2}-y_2^f-y_1^{f_2}+y_1^f| \\
&=|y_2^{f_2}-y_1^{f_2}-(y_2^f-y_1^f)| \\
&<|y_2^{f_2}-y_1^{f_2}| \\
&\leq C^\beta|y_2-y_1|^\beta \\
&=C^\beta|y_2-y_1|^{\beta-\alpha}\cdot|y_2-y_1|^\alpha \\
&<\frac{\xi}{12}|y_2-y_1|^\alpha.
\end{align*}

Now, if $|y_2-y_1|\geq\left(\ds\frac{\xi}{12C^\beta}\right)^{1/(\beta-\alpha)}$, then

\begin{align*}
|(h_{x,0}^{f_2}-h_{x,0}^f)(y_2-y_1)|&=|y_2^{f_2}-y_2^f-y_1^{f_2}+y_1^f| \\
&<|y_2^{f_2}-y_2^f| \\
&<2r \\
&<2\cdot\frac{\xi}{24}\cdot\left(\frac{\xi}{12C^\beta}\right)^{\alpha/(\beta-\alpha)} \\
&\leq\frac{\xi}{12}|y_2-y_1|^\alpha.
\end{align*}

So, $\|h_{x,0}^{f_2}-h_{x,0}^f\|_\alpha\leq\ds\xi/12$. To sum up,

\end{enumerate}

\begin{align*}
d_\alpha(f_2,f)&=\|f_2-f\|_{C^0}+\left\|\frac{\partial f_2}{\partial x}-\frac{\partial f}{\partial x}\right\|_{C^0}+\sup_{x\in[0,1]}\{\|h_{0,x}^{f_2}-h_{0,x}^f\|_\alpha,\|h_{x,0}^{f_2}-h_{x,0}^f\|_\alpha\} \\
&\leq Cr^\beta+C_r\left(\frac{\partial f}{\partial x}\right)+\frac{\xi}{12}+\frac{\xi}{12} \\
&\leq \frac{\xi}{3}.
\end{align*}
 
\end{proof}

Until now we have defined a function $f_2$, which is $C^2$ in the second variable, in particular, $f_2$ is bi-Lipschitz in the second variable, i.e., there exists a constant $L_2>0$ such that $L_2^{-1}|y_2-y_1|\leq|f_2(x,y_2)-f_2(x,y_1)|\leq L_2|y_2-y_1|$, for all $y_1,y_2\in[0,1]$ and $x\in[0,1]$.

\begin{lemma} \label{lema3} For any constant $0<\varepsilon\leq\min\left\{\ds\frac{\xi}{12},\frac{\xi}{12\left\|\ds\frac{\partial f_2}{\partial x}\right\|_{C^0}},\frac{\xi}{12(1+C)},\left(\frac{\xi}{24C^\beta}\right)^{1/(\beta-\alpha)}\right\}$, there exists a function $f_3$ such that $f_3\in\F_{(C_\varepsilon,\beta)}$ for some $C_\varepsilon<C$ and $d_\alpha(f_3,f_2)<\xi/3$.
\end{lemma}

\begin{proof}

Let $\varepsilon>0$, $C_\varepsilon=\max\{C(1-\varepsilon)+\varepsilon, C/(1-\varepsilon+C\varepsilon)\}<C$ and define $f_3$ as an interpolation between $f_2$ and the identity in the second variable: $$f_3(x,y):=(1-\varepsilon)f_2(x,y)+\varepsilon y.$$ In fact, $f_3\in\F_{(C_\varepsilon,\beta)}$: if $y_1,y_2\in[0,1]$ with $y_2>y_1$, then

\begin{align*}
|f_3(x,y_2)-f_3(x,y_1)|&=|(1-\varepsilon)(f_2(x,y_2)-f_2(x,y_1))+\varepsilon(y_2-y_1)| \\
&\leq C(1-\varepsilon)|y_2-y_1|^\beta+\varepsilon|y_2-y_1|^{1-\beta}|y_2-y_1|^\beta \\
&\leq(C(1-\varepsilon)+\varepsilon)|y_2-y_1|^\beta \\
&\leq C_\varepsilon|y_2-y_1|^\beta
\end{align*}

In a similar way, we can see that $|f_3(x,y_2)-f_3(x,y_1)|\geq C_\varepsilon^{-1}|y_2-y_1|^{1/\beta}$:

\begin{align*}
|f_3(x,y_2)-f_3(x,y_1)|&=|(1-\varepsilon)(f_2(x,y_2)-f_2(x,y_1))+\varepsilon(y_2-y_1)| \\
&\geq \left(\frac{1-\varepsilon}{C}+\varepsilon\right)|y_2-y_1|^{1/\beta} \\
&=\frac{1}{\frac{C}{1-\varepsilon+C\varepsilon}}|y_2-y_1|^{1/\beta}\\
&\geq C_{\varepsilon}^{-1}|y_2-y_1|^{1/\beta}. 
\end{align*}

The other conditions are trivial, so, $f_3\in\F_{(C_\varepsilon,\beta)}$. Now, we have to check that $d_\alpha(f_3,f_2)<\xi/3$. In first instance, if $x,y\in[0,1]$, then

$$|f_3(x,y)-f_2(x,y)|=\varepsilon|y-f_2(x,y)|<\varepsilon\leq\frac{\xi}{12},$$ and $$\left|\frac{\partial f_3}{\partial x}(x,y)-\frac{\partial f_2}{\partial x}(x,y)\right|=\left|(1-\varepsilon)\frac{\partial f_2}{\partial x}(x,y)-\frac{\partial f_2}{\partial x}(x,y)\right|\leq\varepsilon \left\|\frac{\partial f_2}{\partial x}\right\|_{C^0}\leq\frac{\xi}{12}.$$ 

For the next condition, notice that $h_{0,x}^{f_3}(y)=f_3(x,y)$, so if $x,y_1,y_2\in[0,1]$ with $y_2>y_1$, then 

\begin{align*}
|h_{0,x}^{f_3}(y_2)-h_{0,x}^f(y_2)-h_{0,x}^{f_3}(x,y_1)+h_{0,x}^f(x,y_1)|&=|f_3(x,y_2)-f_2(x,y_2)-f_3(x,y_1)+f_2(x,y_1)| \\
&=|\varepsilon(y_2-y_1)-\varepsilon(f_2(x,y_2)-f_2(x,y_1))| \\
&\leq\varepsilon(|y_2-y_1|+|f_2(x,y_2)-f_2(x,y_1)|) \\
&<\varepsilon(|y_2-y_1|^\alpha+C|y_2-y_1|^\beta) \\
&<\varepsilon(|y_2-y_1|^\alpha+C|y_2-y_1|^\alpha) \\
&=\varepsilon(1+C)|y_2-y_1|^\alpha.
\end{align*}

So, $\|h_{0,x}^{f_3}-h_{0,x}^{f_2}\|_\alpha\leq\varepsilon(1+C)\leq\xi/12$. Now, define $h_{x,0}^{f_3}(y)=f_{\varepsilon}(x,\cdot)^{-1}(y):=y^{f_3}$. In a first case, if $|y_2-y_1|<(\xi/24C^\beta)^{1/(\beta-\alpha)}$, then, 

\begin{align*}
|h_{x,0}^{f_3}(y_2)-h_{x,0}^{f_2}(y_2)-h_{x,0}^{f_3}(y_1)+h_{x,0}^{f_2}(y_1)|&=|y_2^{f_3}-y_2^{f_2}-y_1^{f_3}+y_1^{f_2}| \\
&=|y_2^{f_3}-y_1^{f_3}-(y_2^{f_2}-y_1^{f_2})| \\
&<|y_2^{f_3}-y_1^{f_3}|+|y_2^{f_2}-y_1^{f_2}| \\
&\leq C_{\varepsilon}^\beta|y_2-y_1|^\beta +C^\beta|y_2-y_1|^\beta\\
&\leq 2C^\beta|y_2-y_1|^{\beta-\alpha}\cdot|y_2-y_1|^\alpha \\
&< 2C^\beta\cdot\left(\left(\frac{\xi}{24C^\beta}\right)^{1/(\beta-\alpha)}\right) ^{\beta-\alpha}\cdot|y_2-y_1|^\alpha \\
&=\frac{\xi}{12}|y_2-y_1|^\alpha.
\end{align*}

Now, if $|y_2-y_1|\geq(\xi/24C^\beta)^{1/(\beta-\alpha)}$, notice that the equality $f_3(x,y_i^{f_3})=f_2(x,y_i^{f_2})$ implies that $$C^{-1}|y_i^{f_3}-y_i^{f_2}|^{1/\beta}\leq|f_2(x,y_i^{f_3})-f_2(x,y_i^{f_2})|=\varepsilon|f_2(x,y_i^{f_3})-y_i^{f_3})|\leq\varepsilon,$$ for $i=1,2$. Then 

\begin{align*}
|(h_{x,0}^{f_3}-h_{x,0}^{f_2})(y_2-y_1)|&=|y_2^{f_3}-y_2^{f_2}-(y_1^{f_3}-y_1^{f_2})| \\
&<|y_2^{f_3}-y_2^{f_2}|+|y_1^{f_3}-y_1^{f_2}| \\
&\leq 2C^{\beta}\varepsilon^\beta \\
&\leq2C^\beta\cdot\left(\frac{\xi}{24C^\beta}\right)^{\beta/(\beta-\alpha)} \\
&=2C^\beta\cdot\frac{\xi}{24C^\beta}\cdot\left(\frac{\xi}{24C^\beta}\right)^{\alpha/(\beta-\alpha)} \\
&\leq\frac{\xi}{12}|y_2-y_1|^\alpha.
\end{align*}

So, $\|h_{x,0}^{f_3}-h_{x,0}^{f_2}\|_\alpha\leq\xi/12$. To sum up, all of this implies that 

$$d_\alpha(f_3,f_2)\leq \|f_3-f_2\|_{C^0}+\left\|\frac{\partial f_3}{\partial x}-\frac{\partial f_2}{\partial x}\right\|_{C^0}+\|h_{0,x}^{f_3}-h_{0,x}^{f_2}\|_{\alpha}+\|h_{x,0}^{f_3}-h_{x,0}^{f_2}\|_{\alpha} <\frac{\xi}{3}.$$
\end{proof}

Note that $f_3$ is also a $C^2$ function in the second variable, in particular, $f_3$ is bi-Lipschitz in the second variable, this means that there exists a constant $L_3>0$ such that $$L_3^{-1}|y_2-y_1|\leq|f_3(x,y_2)-f_3(x,y_2)|\leq L_3|y_2-y_1|,$$ for all $y_1,y_2\in[0,1]$ and $x\in[0,1]$.

Now we are able to define perturbations $\tilde{f}=\tilde{f}(\delta_1,\delta_2,n)$ of $f_3$: for $\delta_1,\delta_2>0$ small and $n\in\N$ large enough, let
$$a(x)=\frac{1}{\gamma}\int_0^x\exp\left(\frac{1}{(2t-1)^2-1}\right)dt,$$ 
where $\gamma=\ds\int_0^1\exp\left(\frac{1}{(2t-1)^2-1}\right)dt$, then $a$ is $C^\infty$, $a(0)=0$ and $a(1)=1$. Now, let $I=[b_1,b_2]$, so we can define a $C^1$ function $\tilde{a}(x)$ such that $\tilde{a}(0)=1/2$, $\tilde{a}_{(\delta_1,b_1-\delta_1)}=\tilde{a}|_{(b_2+\delta_1,1)}=\delta_2$ and $\tilde{a}|_{(b_1,b_2)}=1-\delta_2$.

$$\tilde{a}(x)= \left\{ \begin{array}{ll}
\left(\ds\frac{1}{2}-\delta_2\right)a\left(\ds\frac{x}{\delta_1}\right)+\ds\frac{1}{2}, & \mbox{ if } x\in[0,\delta_1) \\
    \\
    \delta_2, & \mbox{ if } x\in(\delta_1,b_1-\delta_1) \\
    \\
   (1-2\delta_2)a\left(-\ds\frac{x}{\delta_1}+\frac{b_1}{\delta_1}\right)+\delta_2, & \mbox{ if }x\in(b_1-\delta_1,b_1) \\
    \\
    1-\delta_2, & \mbox{ if }x\in(b_1,b_2) \\
    \\
    (1-2\delta_2)a\left(\ds\frac{x}{\delta_1}-\frac{b_2}{\delta_1}\right)+\delta_2, & \mbox{ if }x\in(b_2,b_2+\delta_1) \\
    \\
    \delta_2, & \mbox{ if } x\in(b_2+\delta_1,1]
             \end{array}
   \right.$$
   
For the definition of $\tilde{a}$, $\tilde{a}(x)\in[\delta_2,1-\delta_2]$ and $|\tilde{a}'(x)|\leq\ds\frac{e^{-1}(1-2\delta_2)}{\delta_1 \gamma}<\frac{2(1-2\delta_2)}{\delta_1}$. 

\begin{figure}[htbp]
\centering
\subfigure[Graph of $a$]{\includegraphics[width=5cm]{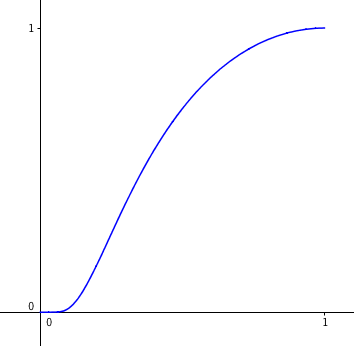}}\hspace{20mm}
\subfigure[Graph of $\tilde{a}$]{\includegraphics[width=5cm]{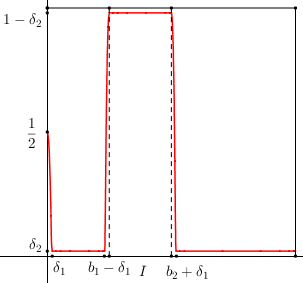}}
\end{figure}

Now define the perturbation for all $a\in\{0,\ldots,2^{n-1}-1\}$. The central curve of $\tilde{f}$ in $\left[\ds\frac{2a}{2^n},\ds\frac{2a+2}{2^n}\right]$ is: $$\tilde{f}\left(x,\frac{2a+1}{2^n}\right)=(1-\tilde{a}(x))f_3\left(x,\frac{2a}{2^n}\right)+\tilde{a}(x)f_3\left(x,\frac{2a+2}{2^n}\right).$$

\begin{figure}[htbp]
\centering
\subfigure[Graph of $f_3$]{\includegraphics[width=6.53cm]{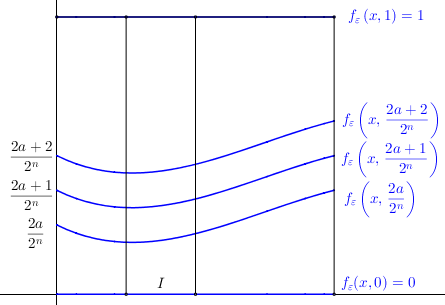}}\hspace{20mm}
\subfigure[Graph of $\tilde{f}$]{\includegraphics[width=6.53cm]{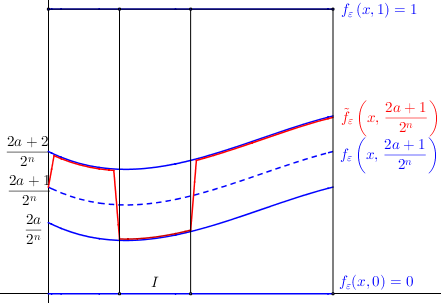}}
\end{figure}

\newpage

With this, we can define $\tilde{f}$ entirely as an interpolation: for $y\in\left[\ds\frac{2a}{2^n},\frac{2a+1}{2^n}\right]$, $$\tilde{f}(x,y)= \tilde{f}\left(x,(1-t)\ds\frac{2a}{2^n}+t\cdot \frac{2a+1}{2^n}\right):=(1-t)\tilde{f}\left(x,\ds\frac{2a}{2^n}\right)+t\tilde{f}\left(x,\ds\frac{2a+1}{2^n}\right)$$ and for $y\in\left[\ds\frac{2a+1}{2^n},\ds\frac{2a+2}{2^n}\right]$, $$\tilde{f}(x,y)=\tilde{f}\left(x,(2-t)\ds\frac{2a+1}{2^n}+(t-1)\frac{2a+2}{2^n}\right):=(2-t)\tilde{f}\left(x,\ds\frac{2a+1}{2^n}\right)+(t-1)\tilde{f}\left(x,\ds\frac{2a+2}{2^n}\right)$$

Then, for all $a\in\{0,\ldots,2^n-1\}$ and $y\in\left[\ds\frac{2a}{2^n},\ds\frac{2a+2}{2^n}\right]$, 

$$\tilde{f}\left(x,\frac{2a+t}{2^n}\right)= \left\{ \begin{array}{ll}
     (1-t\tilde{a}(x))f_3\left(x,\ds\frac{2a}{2^n}\right)+t\tilde{a}(x)f_3\left(x,\ds\frac{2a+2}{2^n}\right), & t\in[0,1] \\
     \\ (1-\tilde{a}(x))(2-t)f_3\left(x,\ds\frac{2a}{2^n}\right)+(t-1+\tilde{a}(x)(2-t))f_3\left(x,\ds\frac{2a+2}{2^n}\right), & t\in[1,2]
             \end{array}
   \right.$$

\begin{lemma} \label{lema4} If $\delta_1,\delta_2,n$ satisfy the following conditions:
\begin{enumerate}
\item
$\ds\frac{L_3(1-\delta_2)}{2^{n(1-\beta)-3}}+C_\varepsilon\leq C$;
\item
$C_\varepsilon\leq2\cdot\ds\frac{2^{n(1-1/\beta)}}{L_3^{-1}\delta_2}$;
\item
$\ds\frac{2^{n(1-1/\beta)+2/\beta}}{L_3^{-1}\delta_2}\leq C$;
\item
$\max\left\{\ds\frac{L_3}{2^{n-1}},\frac{1}{2^n}\left(\frac{4L_3}{\delta_1}+3K_3\right),\frac{3L_3}{2^{n(1-\alpha)-2}},\frac{C^{2\beta-\alpha}}{2^{(n-1)\beta(\beta-\alpha)-1}}\right\}\leq\ds\frac{\xi}{12}$, where $L_3$ is a Lipschitz constant of $f_3$ and $f_3^{-1}$ in the second variable and $K_3$ is a Lipschitz constant of $\ds\frac{\partial f_3}{\partial x}$ with respect to $y$;
\end{enumerate}

then $\tilde{f}\in\F_{(C,\beta)}$ and $d_\alpha(\tilde{f},f_3)<\xi/3$.

\end{lemma}

\begin{proof}

\begin{enumerate}
\item
Clearly, $\tilde{f}$ is $C^1$ in the first variable.
\item
If $y\in\left[\ds\frac{2a}{2^n},\frac{2a+1}{2^n}\right]$, $y=\ds\frac{2a+t}{2^n}$, for some $t\in[0,1]$, then
\begin{align*}
\tilde{f}(0,y)=\tilde{f}\left(0,\frac{2a+t}{2^n}\right)&=(1-t\tilde{a}(0))f_3\left(0,\ds\frac{2a}{2^n}\right)+t\tilde{a}(0)f_3\left(0,\frac{2a+2}{2^n}\right) \\
&=\left(1-\frac{t}{2}\right)\cdot\frac{2a}{2^n}+\frac{t}{2}\cdot\frac{2a+2}{2^n} \\
&=\frac{2a+t}{2^n} \\
&=y.
\end{align*}
Same for $y\in\left[\ds\frac{2a+1}{2^n},\frac{2a+2}{2^n}\right]$.
\item
If $x\in[0,1]$, $a=0$ and $t=0$, then $$\tilde{f}(x,0)=(1-0(1-a(x)))f_3(x,0)+0(1-a(x))f_3(x,0)=f_3(x,0)=0.$$
\item
If $x\in[0,1]$, $a=2^{n-1}-1$ and $t=2$, then
\begin{align*}
\tilde{f}(x,1)&=(1-\tilde{a}(x))(2-2)f_3\left(x,\frac{2^n-2}{2^n}\right)+(2-1+\tilde{a}(x)(2-2))f_3\left(x,\frac{2^n}{2^n}\right) \\
&=f_3(x,1) \\
&=1
\end{align*}
\item
If $y_1,y_2\in\left[\ds\frac{2a}{2^n},\frac{2a+1}{2^n}\right]$, then $y_1=\ds\frac{2a+t_1}{2^n}$, $y_2=\ds\frac{2a+t_2}{2^n}$, where $t_1,t_2\in[0,1]$. Suppose that $t_2>t_1$, then
\begin{align*}
|\tilde{f}(x,y_2)-\tilde{f}(x,y_1)|&=\left|(1-t_2\tilde{a}(x))f_3\left(x,\frac{2a}{2^n}\right)+t_2\tilde{a}(x)f_3\left(x,\frac{2a+2}{2^n}\right)\right. \\
&\mbox{ \ \ }\left.-(1-t_1\tilde{a}(x))f_3\left(x,\frac{2a}{2^n}\right)-t_1\tilde{a}(x)f_3\left(x,\frac{2a+2}{2^n}\right)\right| \\
&=\left|\left(f_3\left(x,\frac{2a+2}{2^n}\right)-f_3\left(x,\frac{2a}{2^n}\right)\right)\tilde{a}(x)(t_2-t_1)\right| \\
&\leq L_3\cdot\frac{2}{2^{n}}\cdot(1-\delta_2)\cdot2^n\cdot|y_2-y_1| \\
&=2L_3(1-\delta_2)\cdot|y_2-y_1|^{1-\beta}\cdot|y_2-y_1|^\beta \\
&\leq 2L_3(1-\delta_2)\cdot\left(\frac{1}{2^n}\right)^{1-\beta}|y_2-y_1|^\beta \\
&=\frac{L_3(1-\delta_2)}{2^{n(1-\beta)-1}}|y_2-y_1|^\beta.
\end{align*}

We have the same for $y_1,y_2\in\left[\ds\frac{2a+1}{2^n},\ds\frac{2a+2}{2^n}\right]$. Now, if $y_1\in\left[\ds\frac{2a}{2^n},\ds\frac{2a+1}{2^n}\right]$, $y_2\in\left[\ds\frac{2a+1}{2^n},\ds\frac{2a+2}{2^n}\right]$ and using the fact that $a^\beta+b^\beta<2(a+b)^\beta$, for $a,b>0$ and $0<\beta<1$, then

\begin{align*}
|\tilde{f}(x,y_2)-\tilde{f}(x,y_1)|&=\left|\tilde{f}(x,y_2)-\tilde{f}\left(x,\frac{2a+1}{2^n}\right)\right|+\left|\tilde{f}\left(x,\frac{2a+1}{2^n}\right)-\tilde{f}(x,y_1)\right| \\
&\leq \frac{L_3(1-\delta_2)}{2^{n(1-\beta)-1}}\left|y_2-\frac{2a+1}{2^n}\right|^\beta+\frac{L_3(1-\delta_2)}{2^{n(1-\beta)-1}}\left|\frac{2a+1}{2^n}-y_1\right|^\beta \\
&<\frac{L_3(1-\delta_2)}{2^{n(1-\beta)-2}}|y_2-y_1|^\beta.
\end{align*}

Now, if $y_1\in\left[\ds\frac{2a_1}{2^n},\ds\frac{2a_1+2}{2^n}\right]$, $y_2\in\left[\ds\frac{2a_2}{2^n},\ds\frac{2a_2+2}{2^n}\right]$, for $a_2>a_1$ and recalling that $\tilde{f}\left(x,\ds\frac{2a_2}{2^n}\right)=f_3\left(x,\ds\frac{2a_2}{2^n}\right)$ and $\tilde{f}\left(x,\ds\frac{2a_1+2}{2^n}\right)=f_3\left(x,\ds\frac{2a_1+2}{2^n}\right)$, then

\begin{align*}
|\tilde{f}(x,y_2)-\tilde{f}(x,y_1)|&=\left|\tilde{f}(x,y_2)-\tilde{f}\left(x,\frac{2a_2}{2^n}\right)\right|+\left|\tilde{f}\left(x,\frac{2a_2}{2^n}\right)-\tilde{f}\left(x,\frac{2a_1+2}{2^n}\right)\right| \\
&\mbox{ \ \ }+\left|\tilde{f}\left(x,\frac{2a_1+2}{2^n}\right)-\tilde{f}(x,y_1)\right| \\
&\leq \frac{L_3(1-\delta_2)}{2^{n(1-\beta)-2}}\left|y_2-\frac{2a_2}{2^n}\right|^\beta+C_\varepsilon\left|\frac{2a_2}{2^n}-\frac{2a_1+2}{2^n}\right|^\beta \\
&\mbox{ \ \ }+\frac{L_3(1-\delta_2)}{2^{n(1-\beta)-2}}\left|\frac{2a+2}{2^n}-y_1\right|^\beta \\
&<\frac{L_3(1-\delta_2)}{2^{n(1-\beta)-3}}\left|y_2-\frac{2a_2}{2^n}+\frac{2a+2}{2^n}-y_1\right|^\beta+C_\varepsilon\left|\frac{2a_2}{2^n}-\frac{2a+2}{2^n}\right|^\beta.
\end{align*}
So we have two cases: if $a_2=a_1+1$, then $$|\tilde{f}(x,y_2).\tilde{f}(x,y_1)|<\frac{L_3(1-\delta_2)}{2^{n(1-\beta)-3}}|y_2-y_1|^\beta,$$ but if $a_2>a_1+1$, then 

\begin{align*}
|\tilde{f}(x,y_2)-\tilde{f}(x,y_1)|&\leq\frac{L_3(1-\delta_2)}{2^{n(1-\beta)-3}}\left|y_2-\frac{2a_2}{2^n}+\frac{2a_1+2}{2^n}-y_1\right|^\beta+C_\varepsilon\left|\frac{2a_2}{2^n}-\frac{2a_1+2}{2^n}\right|^\beta \\
&<\frac{L_3(1-\delta_2)}{2^{n(1-\beta)-3}}|y_2-y_1|^\beta+C_\varepsilon|y_2-y_1|^\beta \\
&=\left(\frac{L_3(1-\delta_2)}{2^{n(1-\beta)-3}}+C_\varepsilon\right)|y_2-y_1|^\beta \\
&\leq C|y_2-y_1|^\beta,
\end{align*} 

and this implies that for all $y_1,y_2\in[0,1]$, $|\tilde{f}(x,y_2)-\tilde{f}(x,y_1)|\leq C|y_2-y_1|^\beta$.

\newpage

\item
If $y_1,y_2\in\left[\ds\frac{2a}{2^n},\frac{2a+1}{2^n}\right]$, then $y_1=\ds\frac{2a+t_1}{2^n}$, $y_2=\ds\frac{2a+t_2}{2^n}$, where $t_1,t_2\in[0,1]$. Suppose that $t_2>t_1$, then

\begin{align*}
|\tilde{f}(x,y_2)-\tilde{f}(x,y_1)|&=\left|\left(f_3\left(x,\frac{2a+2}{2^n}\right)-f_3\left(x,\frac{2a}{2^n}\right)\right)\tilde{a}(x)(t_2-t_1)\right| \\
&\geq L_3^{-1}\cdot\frac{2}{2^n}\cdot\delta_2\cdot 2^n|y_2-y_1| \\
&=2L_3^{-1}\delta_2|y_2-y_1|^{1-1/\beta}|y_2-y_1|^{1/\beta} \\
&\geq2L_3^{-1}\delta_2 \cdot\left(\frac{1}{2^n}\right)^{1-1/\beta}|y_2-y_1|^{1/\beta} \\
&=\frac{L_3^{-1}\delta_2}{2^{n(1-1/\beta)-1}}|y_2-y_1|^{1/\beta}
\end{align*}

Same for $y_1,y_2\in\left[\ds\frac{2a+1}{2^n},\ds\frac{2a+2}{2^n}\right]$. Now, if $y_1\in\left[\ds\frac{2a}{2^n},\ds\frac{2a+1}{2^n}\right]$, $y_2\in\left[\ds\frac{2a+1}{2^n},\ds\frac{2a+2}{2^n}\right]$ and using the fact that $a^{1/\beta}+b^{1/\beta}>\ds\frac{1}{2}(a+b)^{1/\beta}$, for $a,b>0$ and $0<\beta<1$, then

\begin{align*}
|\tilde{f}(x,y_2)-\tilde{f}(x,y_1)|&=\left|\tilde{f}(x,y_2)-\tilde{f}\left(x,\frac{2a+1}{2^n}\right)\right|+\left|\tilde{f}\left(x,\frac{2a+1}{2^n}\right)-\tilde{f}(x,y_1)\right| \\
&\geq\frac{L_3^{-1}\delta_2}{2^{n(1-1/\beta)-1}}\left|y_2-\frac{2a+1}{2^n}\right|^{1/\beta}+\frac{L_3^{-1}\delta_2}{2^{n(1-1/\beta)-1}}\left|\frac{2a+1}{2^n}-y_1\right|^{1/\beta} \\
&\geq\frac{L_3^{-1}\delta_2}{2^{n(1-1/\beta)}}|y_2-y_1|^{1/\beta}.
\end{align*}

Now, if $y_1\in\left[\ds\frac{2a_1}{2^n},\ds\frac{2a_1+2}{2^n}\right]$, $y_2\in\left[\ds\frac{2a_2}{2^n},\ds\frac{2a_2+2}{2^n}\right]$, for $a_2>a_1$ and recalling that $\tilde{f}\left(x,\ds\frac{2a_2}{2^n}\right)=f_3\left(x,\ds\frac{2a_2}{2^n}\right)$ and $\tilde{f}\left(x,\ds\frac{2a_1+2}{2^n}\right)=f_3\left(x,\ds\frac{2a_1+2}{2^n}\right)$, then
\begin{align*}
|y_2-y_1|&=\left|y_2-\frac{2a_2}{2^n}\right|+\left|\frac{2a_2}{2^n}-\frac{2a_1+2}{2^n}\right|+\left|\frac{2a_1+2}{2^n}-y_1\right| \\
&\leq\left(\frac{2^{n(1-1/\beta)}}{L_3^{-1}\delta_2}\right)^\beta\left|\tilde{f}(x,y_2)-\tilde{f}\left(x,\frac{2a_2}{2^n}\right)\right|^\beta+C_\varepsilon^\beta\left|f_3\left(x,\frac{2a_2}{2^n}\right)-f_3\left(x,\frac{2a_1+2}{2^n}\right)\right|^\beta \\
&\mbox{ \ \ }+\left(\frac{2^{n(1-1/\beta)}}{L_3^{-1}\delta_2}\right)^\beta\left|\tilde{f}\left(x,\frac{2a_1+2}{2^n}\right)-\tilde{f}(x,y_1)\right|^\beta \\
&\leq2\left(\frac{2^{n(1-1/\beta)}}{L_3^{-1}\delta_2}\right)^\beta\left|\tilde{f}(x,y_2)-\tilde{f}\left(x,\frac{2a_2}{2^n}\right)+\tilde{f}\left(x,\frac{2a_1+2}{2^n}\right)-\tilde{f}(x,y_1)\right|^\beta \\
&\mbox{ \ \ }+2\left(\frac{2^{n(1-1/\beta)}}{L_3^{-1}\delta_2}\right)^\beta\left|\tilde{f}\left(x,\frac{2a_2}{2^n}\right)-\tilde{f}\left(x,\frac{2a_1+2}{2^n}\right)\right|^\beta \\
&\leq4\left(\frac{2^{n(1-1/\beta)}}{L_3^{-1}\delta_2}\right)^\beta|\tilde{f}(x,y_2)-\tilde{f}(x,y_1)|^\beta \\
&=\left(\frac{2^{n(1-1/\beta)+2/\beta}}{L_3^{-1}\delta_2}\right)^\beta|\tilde{f}(x,y_2)-\tilde{f}(x,y_1)|^\beta \\
&\leq C^\beta|\tilde{f}(x,y_2)-\tilde{f}(x,y_1)|^\beta.
\end{align*}
\end{enumerate}

\ \

So for all $y_1,y_2\in[0,1]$, $|\tilde{f}(x,y_2)-\tilde{f}(x,y_1)|\geq C^{-1}|y_2-y_1|^{1/\beta}$. With all of this, $\tilde{f}\in\F_{(C,\beta)}$. Now we have to check that $d_{\alpha}(\tilde{f},f_3)<\xi/3$.
\begin{enumerate}
\item
We are going to estimate $\|\tilde{f}-f_3\|_{C^0}$. If $y\in[0,1]$, there exists $a=0,\ldots,2^{n-1}$ such that $y\in\left[\ds\frac{2a}{2^n},\frac{2a+2}{2^n}\right]$, $y=\ds\frac{2a+t}{2^n}$, for some $t\in[0,2]$. So, 
$$|\tilde{f}(x,y)-f_3(x,y)|\leq\left|f_3\left(x,\frac{2a+2}{2^n}\right)-f_3\left(x,\frac{2a}{2^n}\right)\right|\leq\frac{2L_3}{2^n},$$ so this implies that $\|\tilde{f}-f_3\|_{C^0}\leq\ds\frac{L_3}{2^{n-1}}\leq\ds\frac{\xi}{12}$.
\item
We are going to estimate $\left\|\ds\frac{\partial \tilde{f}}{\partial x}-\frac{\partial f_3}{\partial x}\right\|_{C^0}$. If $y\in\left[\ds\frac{2a}{2^n},\frac{2a+1}{2^n}\right]$, then
\begin{align*}
\left|\frac{\partial\tilde{f}}{\partial x}(x,y)-\frac{\partial f_3}{\partial x}(x,y)\right|&=\left|-t\tilde{a}'(x)f_3\left(x,\frac{2a}{2^n}\right)+(1-t\tilde{a}(x))\frac{\partial f_3}{\partial x}\left(x,\frac{2a}{2^n}\right)+t\tilde{a}'(x)f_3\left(x,\frac{2a+2}{2^n}\right)\right. \\
&\mbox{ \ \ }\left.+t\tilde{a}(x)\frac{\partial f_3}{\partial x}\left(x,\frac{2a+2}{2^n}\right)-\frac{\partial f_3}{\partial x}(x,y)\right| \\
&=\left|t\tilde{a}'(x)\left(f_3\left(x,\frac{2a+2}{2^n}\right)-f_3\left(x,\frac{2a}{2^n}\right)\right)+\frac{\partial f_3}{\partial x}\left(x,\frac{2a}{2^n}\right)-\frac{\partial f_3}{\partial x}(x,y)\right. \\
&\mbox{ \ \ }\left.+t\tilde{a}(x)\left(\frac{\partial f_3}{\partial x}\left(x,\frac{2a+2}{2^n}\right)-\frac{\partial f_3}{\partial x}\left(x,\frac{2a}{2^n}\right)\right)\right| \\
&\leq t|\tilde{a}'(x)|\left|f_3\left(x,\frac{2a+2}{2^n}\right)-f_3\left(x,\frac{2a}{2^n}\right)\right| +\left|\frac{\partial f_3}{\partial x}(x,y)-\frac{\partial f_3}{\partial x}\left(x,\frac{2a}{2^n}\right)\right|\\
&\mbox{ \ \ }+t\tilde{a}(x)\left|\frac{\partial f_3}{\partial x}\left(x,\frac{2a+2}{2^n}\right)-\frac{\partial f_3}{\partial x}\left(x,\frac{2a}{2^n}\right)\right| \\
&\leq \frac{2(1-2\delta_2)}{\delta_1}\cdot L_3\cdot\frac{2}{2^n}+K_3\cdot\frac{1}{2^n}+(1-\delta_2)K_3\cdot\frac{2}{2^n}\\
&=\frac{1}{2^n}\left(\frac{4L_3(1-2\delta_2)}{\delta_1}+2K_3(1-\delta_2)+K_3\right) \\
&<\frac{1}{2^n}\left(\frac{4L_3}{\delta_1}+3K_3\right).
\end{align*}

Same for $y\in\left[\ds\frac{2a+1}{2^n},\frac{2a+2}{2^n}\right]$. So, 
$$\left\|\ds\frac{\partial\tilde{f}}{\partial x}-\ds\frac{\partial f_3}{\partial x}\right\|_{C^0}\leq\frac{1}{2^n}\left(\frac{4L_3}{\delta_1}+3K_3\right)\leq\frac{\xi}{12}.$$
\item
We are going to estimate $\|h_{0,x}^{\tilde{f}}-h_{0,x}^{f_3}\|_{\alpha}$. For simplicity define $g(x,y):=\tilde{f}(x,y)-f_3(x,y)$. So, if $y\in\left[\ds\frac{2a}{2^n},\ds\frac{2a+1}{2^n}\right]$, then

\begin{align*}
g(x,y)&=\tilde{f}(x,y)-f_3(x,y) \\
&=(1-t\tilde{a}(x))f_3\left(x,\frac{2a}{2^n}\right)+t\tilde{a}(x)f_3\left(x,\frac{2a+2}{2^n}\right)-f_3(x,y) \\
&=t\tilde{a}(x)\left(f_3\left(x,\frac{2a+2}{2^n}\right)-f_3\left(x,\frac{2a}{2^n}\right)\right)-\left(f_3(x,y)-f_3\left(x,\frac{2a}{2^n}\right)\right),
\end{align*}

and if $y\in\left[\ds\frac{2a+1}{2^n},\ds\frac{2a+2}{2^n}\right]$, then

\begin{align*}
g(x,y)&=\tilde{f}(x,y)-f_3(x,y) \\
&=(1-\tilde{a}(x))(2-t)f_3\left(x,\frac{2a}{2^n}\right)+(t-1+\tilde{a}(x)(2-t))f_3\left(x,\frac{2a+2}{2^n}\right)-f_3(x,y) \\
&=(\tilde{a}(x)(2-t)+t-1)\left(f_3\left(x,\frac{2a+2}{2^n}\right)-f_3\left(x,\frac{2a}{2^n}\right)\right)-\left(f_3(x,y)-f_3\left(x,\frac{2a}{2^n}\right)\right)
\end{align*}

In the first case, if $y_1,y_2\in\left[\ds\frac{2a}{2^n},\ds\frac{2a+1}{2^n}\right]$, then

\begin{align*}
|g(x,y_2)-g¡(x,y_1)|&=\left|t_2\tilde{a}(x)\left(f_3\left(x,\frac{2a+2}{2^n}\right)-f_3\left(x,\frac{2a}{2^n}\right)\right)-\left(f_3(x,y_2)-f_3\left(x,\frac{2a}{2^n}\right)\right)\right. \\
&\mbox{ \ \ }\left.-t_1\tilde{a}(x)\left(f_3\left(x,\frac{2a+2}{2^n}\right)-f_3\left(x,\frac{2a}{2^n}\right)\right)+\left(f_3(x,y_1)-f_3\left(x,\frac{2a}{2^n}\right)\right)\right| \\
&=\left|\tilde{a}(x)(t_2-t_1)\left(f_3\left(x,\frac{2a+2}{2^n}\right)-f_3\left(x,\frac{2a}{2^n}\right)\right)-(f_3(x,y_2)-f_3(x,y_1))\right| \\
&\leq(1-\delta_2)\cdot 2^n\cdot |y_2-y_1|\cdot L_3\cdot\frac{2}{2^n}+L_3|y_2-y_1| \\
&=(2L_3(1-\delta_2)+L_3)|y_2-y_1| \\
&<3L_3|y_2-y_1|^{1-\alpha}\cdot|y_2-y_1|^\alpha \\
&\leq\frac{3L_3}{2^{n(1-\alpha)}}|y_2-y_1|^\alpha.
\end{align*}

Same for $y_1,y_2\in\left[\ds\frac{2a+1}{2^n},\ds\frac{2a+2}{2^n}\right]$. Now, if $y_1\in\left[\ds\frac{2a}{2^n},\ds\frac{2a+1}{2^n}\right]$ and $y_2\in\left[\ds\frac{2a+1}{2^n},\ds\frac{2a+2}{2^n}\right]$, then

\begin{align*}
|g(x,y_2)-g(x,y_1)|&\leq\left|g(x,y_2)-g\left(x,\frac{2a+1}{2^n}\right)\right|+\left|g\left(x,\frac{2a+1}{2^n}\right)-g(x,y_1)\right| \\
&<\frac{3L_3}{2^{n(1-\alpha)}}\left|y_2-\frac{2a+1}{2^n}\right|^\alpha+\frac{3L_3}{2^{n(1-\alpha)}}\left|\frac{2a+1}{2^n}-y_1\right|^\alpha \\
&<\frac{3L_3}{2^{n(1-\alpha)-1}}|y_2-y_1|^\alpha.
\end{align*}

For the last case, note that $g\left(x,\ds\frac{2a}{2^n}\right)=0$, for all $a\in\{0,1,\ldots,2^n-1\}$, so if $y_1\in\left[\ds\frac{2a}{2^n},\ds\frac{2a+2}{2^n}\right]$ and $y_2\in\left[\ds\frac{2a_2}{2^n},\ds\frac{2a_2+2}{2^n}\right]$, for $a_2>a$, then

\begin{align*}
|g(x,y_2)-g(x,y_1)|&\leq\left|g(x,y_2)-g\left(x,\frac{2a_2}{2^n}\right)\right|+\left|g\left(x,\frac{2a_2}{2^n}\right)-g\left(x,\frac{2a+2}{2^n}\right)\right| \\
&\mbox{ \ \ }+\left|g\left(x,\frac{2a+2}{2^n}\right)-g(x,y_1)\right| \\
&<\frac{3L_3}{2^{n(1-\alpha)-1}}\cdot\left|y_2-\frac{2a_2}{2^n}\right|^\alpha+\frac{3L_3}{2^{n(1-\alpha)-1}}\cdot\left|\frac{2a+2}{2^n}-y_1\right|^\alpha \\
&<\frac{3L_3}{2^{n(1-\alpha)-2}}\cdot\left|y_2-\frac{2a_2}{2^n}+\frac{2a+2}{2^n}-y_1\right|^\alpha \\
&<\frac{3L_3}{2^{n(1-\alpha)-2}}|y_2-y_1|^\alpha.
\end{align*}

Finally, we have that $\|h_{0,x}^{\tilde{f}}-h_{0,x}^{f_3}\|_{\alpha}\leq\ds\frac{3L_3}{2^{n(1-\alpha)-2}}\leq\frac{\xi}{12}$.

\item
For simplicity, define $g(x,y):=\tilde{f}(x,\cdot)^{-1}(y)-f_3(x,\cdot)^{-1}(y):=y^{\tilde{f}}-y^{f_3}$. In a first case, take $y_1,y_2\in\left[\tilde{f}\left(x,\ds\frac{2a+1}{2^n}\right),\tilde{f}\left(x,\ds\frac{2a+2}{2^n}\right)\right]$ and note that $$0<y_2-y_1\leq\left|\tilde{f}\left(x,\frac{2a+2}{2^n}\right)-\tilde{f}\left(x,\frac{2a}{2^n}\right)\right|\leq C\left(\frac{2}{2^n}\right)^\beta.$$ This implies that

\begin{align*}
|g(x,y_2)-g(x,y_1)|&=|\tilde{f}(x,\cdot)^{-1}(y_2)-f_3(x,\cdot)^{-1}(y_2)-\tilde{f}(x,\cdot)^{-1}(y_1)+f_3(x,\cdot)^{-1}(y_1)| \\
&:=|y_2^{\tilde{f}}-y_2^{f_3}-y_1^{\tilde{f}}+y_1^{f_3}| \\
&=|y_2^{\tilde{f}}-y_1^{\tilde{f}}-(y_2^{f_3}-y_1^{f_3})| \\
&<|y_2^{\tilde{f}}-y_1^{\tilde{f}}| \\
&\leq C^\beta|y_2-y_1|^\beta \\
&=C^\beta|y_2-y_1|^{\beta-\alpha}|y_2-y_1|^\alpha \\
&\leq C^\beta\cdot C^{\beta-\alpha}\cdot\left(\frac{2}{2^n}\right)^{\beta(\beta-\alpha)}|y_2-y_1|^\alpha \\
&=\frac{C^{2\beta-\alpha}}{2^{(n-1)\beta(\beta-\alpha)}}|y_2-y_1|^\alpha
\end{align*}

Same for $y_1,y_2\in\left[\tilde{f}\left(x,\ds\frac{2a+1}{2^n}\right),\tilde{f}\left(x,\ds\frac{2a+2}{2^n}\right)\right]$. Now, if \newline $y_1\in\left[\tilde{f}\left(x,\ds\frac{2a_1}{2^n}\right),\tilde{f}\left(x,\ds\frac{2a_1+2}{2^n}\right)\right]$ and $y_2\in\left[\tilde{f}\left(x,\ds\frac{2a_2}{2^n}\right),\tilde{f}\left(x,\ds\frac{2a_2+2}{2^n}\right)\right]$ with $a_2>a_1$, consider $z_1,z_2$ such that $f_3\left(x,\ds\frac{2a_1+2}{2^n}\right)=z_1$ and $f_3\left(x,\ds\frac{2a_2}{2^n}\right)=z_2$, then 

\begin{align*}
|g(x,y_2)-g(x,y_1)|&\leq|g(x,y_2)-g(x,z_2)|+|g(x,z_2)-g(x,z_1)|+|g(x,z_1)-g(x,y_1)| \\
&\leq\frac{C^{2\beta-\alpha}}{2^{(n-1)\beta(\beta-\alpha)}}|y_2-z_2|^\alpha+\frac{C^{2\beta-\alpha}}{2^{(n-1)\beta(\beta-\alpha)}}|z_1-y_1|^\alpha \\
&<\frac{C^{2\beta-\alpha}}{2^{(n-1)\beta(\beta-\alpha)-1}}|y_2-y_1-(z_2-z_1)|^\alpha \\
&<\frac{C^{2\beta-\alpha}}{2^{(n-1)\beta(\beta-\alpha)-1}}|y_2-y_1|^\alpha.
\end{align*}

Finally, we have that $\|h_{x,0}^{\tilde{f}}-h_{x,0}^{f_3}\|_\alpha\leq\ds\frac{C^{2\beta-\alpha}}{2^{(n-1)\beta(\beta-\alpha)-1}}\leq\frac{\xi}{12}$, and all of this implies that
\begin{align*}
d_\alpha(\tilde{f},f_3)&=\|\tilde{f}-f_3\|_{C^0}+\left\|\frac{\partial \tilde{f}}{\partial x}-\frac{\partial f_3}{\partial x}\right\|_{C^0}+\|h_{0,x}^{\tilde{f}}-h_{0,x}^{f_3}\|_{\alpha}+\|h_{x,0}^{\tilde{f}}-h_{x,0}^{f_3}\|_{\alpha} \\
&\leq \frac{L_3}{2^{n-1}}+\frac{1}{2^n}\left(\frac{4L_3(1-2\delta_2)}{\delta_1}+2K_3 +L_3\right)+\frac{L_3(1-\delta_2)}{2^{n(1-\alpha)-2}} +\frac{C^{2\beta-\alpha}}{2^{(n-1)\beta(\beta-\alpha)-1}} \\
&\leq\frac{\xi}{12}+\frac{\xi}{12}+\frac{\xi}{12}+\frac{\xi}{12} \\
&=\frac{\xi}{3}.
\end{align*}
\end{enumerate}
\end{proof}

\begin{lemma} \label{lema5} Furthermore, if $\delta_1,\delta_2$ satisfy $$\max\left\{\frac{L_3^2\delta_2\mu_1(I)}{1-\mu_1(I)-3\delta_1},\frac{L_3^2(3\delta_1+\delta_2(1-\mu_1(I)-3\delta_1))}{\mu_1(I)}\right\}<\frac{1}{m},$$ where $\mu_1$ is the Lebesgue measure in $[0,1]$, then $\tilde{f}\in B_{m,I}$.
\end{lemma}

\begin{proof}

Let define the sets 
$$P_1=\left\{(x,y)\in M:y\in\left[\tilde{f}\left(x,\frac{2a}{2^n}\right),\tilde{f}\left(x,\frac{2a+1}{2^n}\right)\right)\right\}$$ 
and 
$$P_2=\left\{(x,y)\in M:y\in\left[\tilde{f}\left(x,\frac{2a+1}{2^n}\right),\tilde{f}\left(x,\frac{2a+2}{2^n}\right)\right)\right\}.$$

\begin{enumerate}
\item
First we have
\begin{align*}
\mu(P_1)&\geq(1-\delta_2)\int_{\delta_1}^{b_1-\delta_1}\left(f_3\left(x,\frac{2a+2}{2^n}\right)-f_3\left(x,\frac{2a}{2^n}\right)\right)dx \\
&\mbox{ \ \ }+(1-\delta_2)\int_{b_2+\delta_1}^1\left(f_3\left(x,\frac{2a+2}{2^n}\right)-f_3\left(x,\frac{2a}{2^n}\right)\right)dx \\
&\geq (1-\delta_2)(b_1-2\delta_1)\cdot L_3^{-1}\cdot\frac{2}{2^n}+(1-\delta_2)(1-b_2-\delta_1)\cdot L_3^{-1}\cdot\frac{2}{2^n} \\
&=\frac{L_3^{-1}}{2^{n-1}}(1-\mu_1(I)-3\delta_1).
\end{align*}
Now, 
$$\mu(P_1\cap\tilde{I})=\delta_2\int_{b_1}^{b_2}\left(f_3\left(x,\frac{2a+2}{2^n}\right)-f_3\left(x,\frac{2a}{2^n}\right)\right)dx\leq\frac{2L_3\delta_2\mu_1(I)}{2^n}.$$
So, $$\frac{\mu(P_1\cap\tilde{I})}{\mu(P_1)}\leq\frac{L_3^2\delta_2\mu_1(I)}{1-\mu_1(I)-3\delta_1}<\frac{1}{m}.$$
\item
Let define $C_2=P_2\setminus (P_2\cap\tilde{I})$, then 
$$\frac{\mu(C_2)}{\mu(P_2)}=1-\frac{\mu(P_2\cap\tilde{I})}{\mu(P_2)}.$$
So, if we want that $\ds\frac{\mu(P_2\cap\tilde{I})}{\mu(P_2)}>1-\frac{1}{m}$, is enough to see that $\ds\frac{\mu(C_2)}{\mu(P_2)}<\frac{1}{m}$. First, we have
\begin{align*}
\mu(C_2)&\leq\int_0^{\delta_1}\left(f_3\left(x,\frac{2a+2}{2^n}\right)-f_3\left(x,\frac{2a}{2^n}\right)\right)dx+\delta_2\int_{\delta_1}^{b_1-\delta_1}\left(f_3\left(x,\frac{2a+2}{2^n}\right)-f_3\left(x,\frac{2a}{2^n}\right)\right)dx \\
&\mbox{ \ \ }+\int_{b_1-\delta_!}^{b_1} \left(f_3\left(x,\frac{2a+2}{2^n}\right)-f_3\left(x,\frac{2a}{2^n}\right)\right)dx+\int_{b_2}^{b_2+\delta_1}\left(f_3\left(x,\frac{2a+2}{2^n}\right)-f_3\left(x,\frac{2a}{2^n}\right)\right)dx \\
&\mbox{ \ \ }+\delta_2\int_{b_2+\delta_1}^1 \left(f_3\left(x,\frac{2a+2}{2^n}\right)-f_3\left(x,\frac{2a}{2^n}\right)\right)dx \\
&\leq\delta_1\cdot L_3\cdot\frac{2}{2^n}+\delta_2(b_1-2\delta_1)L_3\cdot\frac{2}{2^n}+\delta_1\cdot L_3\cdot\frac{2}{2^n}+\delta_2(1-b_2-\delta_1)\cdot L_3\cdot \frac{2}{2^n} \\
&=\frac{L_3}{2^{n-1}}(3\delta_1+\delta_2(1-\mu_1(I)-3\delta_1)).
\end{align*}

Now,
\begin{align*}
\mu(P_2)&\geq(1-\delta_2)\int_{b_1}^{b_2}\left(f_3\left(x,\frac{2a+2}{2^n}\right)-f_3\left(x,\frac{2a}{2^n}\right)\right)dx \\
&\geq(1-\delta_2)\mu_1(I)\cdot L_3^{-1}\cdot\frac{2}{2^n} \\
&=\frac{L_3^{-1}}{2^{n-1}}.
\end{align*}

So, 
$$\frac{\mu(C_2)}{\mu(P_2)}\leq\frac{L_3^2 (3\delta_1+\delta_2(1-\mu_1(I)-3\delta_1))}{\mu_1(I)}<\frac{1}{m}.$$
\end{enumerate}

Therefore, $\tilde{f}\in B_{m,I}$.
\end{proof}

To sum up, given $\delta_1,\delta_2>0$ satisfying the hypothesis of Lemma \ref{lema5}, there exists $n\in\N$ satisfying the hypothesis of Lemma \ref{lema4} such that the corresponding $\tilde{f}$ satisfies $d_{\alpha}(f,\tilde{f})<\xi$ and $\tilde{f}\in B_{m,I}$. So, $B_{m,I}$ is dense, for any $m\in\N$ and $I=[b_1,b_2]$, with $b_1,b_2\in\Q$.
\end{proof}

\ \
\ \

\section{Proof of Theorem \ref{main}}{\label{teo}}

\ \
\ \

In this section we present the proof of Theorem \ref{main}. Let define $\RR=\bigcap_m\bigcap_{I=[b,c],b,c\in\Q}B_{m,I}$.

\begin{proof} Let $f\in\RR$, then for all $m\in\N$, for all $I=[b_1,b_2]$ with $b_1,b_2\in\Q$, there exist $n\in\N$ such that $$\frac{\mu(P_n\cap\tilde{I})}{\mu(P_n)}<\frac{1}{m}\mbox{ or } \frac{\mu(P_n\cap\tilde{I})}{\mu(P_n)}>1-\frac{1}{m},$$ where $\tilde{I}=I\times[0,1]$ and $P_n\in\PP_{f,n}$. This implies that $\left\{\ds\frac{\mu(P_n\cap\tilde{I})}{\mu(P_n)}:n\in\N\right\}$ has $0$ or $1$ as an accumulation point. If $$\mu_P(\tilde{I})=\lim_{n\to\infty}\frac{\mu(P_n\cap\tilde{I})}{\mu(P_n)}.$$ exists, then it has to be $0$ or $1$. But by the observation after Rokhlin's theorem, for $\hat{\mu}$-a.e. $P\in\PP_f$, $$\mu_P(\tilde{I})=\lim_{n\to\infty}\frac{\mu(P_n\cap\tilde{I})}{\mu(P_n)}.$$ So, for $\hat{\mu}$-a.e. $P\in\PP_f$, $\mu_P(\tilde{I})=0$ or $1$. Note that if $P$ is a leaf of the foliation $\PP_f$, $P=f([0,1]\times\{y_P\})$, for some $y_P\in[0,1]$. So, the last statement says that $\mu_{y_P}(\tilde{I})=0$ or $1$, for $\mu_1$-a.e. $y_P\in[0,1]$. The next lemma help us to conclude the theorem:

\begin{lemma} If $\mu$ is a probability measure in $[0,1]$ such that $\mu(I)=0$ or $1$, for any $I=[b_1,b_2]$ with $b_1,b_2\in\Q$, then $\mu=\delta_x$, for some $x\in[0,1]$.
\end{lemma}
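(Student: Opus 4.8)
The plan is to show that the hypothesis forces the cumulative distribution function of $\mu$ to jump from $0$ to $1$ at a single point, which is exactly the characterization of a Dirac mass. First I would define $s := \sup\{t \in [0,1] : \mu([0,t]) = 0\}$, with the convention that the supremum of the empty set is $0$; heuristically $s$ will be the location of the atom. The point of departure is that for rational endpoints the measure of a closed interval is always $0$ or $1$, and I want to upgrade this to: $\mu([0,t]) \in \{0,1\}$ for \emph{every} $t \in [0,1]$, not just rational ones. This follows by monotone approximation: $\mu([0,t]) = \lim_{q \downarrow t,\, q \in \Q} \mu([0,q])$ (continuity from above of the measure applied to the nested intervals $[0,q]$), and a limit of a sequence taking values in $\{0,1\}$ that is monotone must itself be $0$ or $1$. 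The same works approaching from below using intervals $[0,q]$ with $q < t$ rational, $q \uparrow t$, and continuity from below.

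Next I would argue that $\mu(\{s\}) = 1$. The key observations are: for any $t < s$ we have $\mu([0,t]) = 0$ (by definition of the supremum, since some rational $q$ with $t < q < s$ has $\mu([0,q]) = 0$, or directly using the previous paragraph), hence $\mu([0,s)) = \lim_{t \uparrow s} \mu([0,t]) = 0$; and for any $t > s$ we have $\mu([0,t]) = 1$ (again by definition of $s$: $[0,t]$ is not in the defining family, and by the upgraded dichotomy its measure is $0$ or $1$, so it must be $1$), hence $\mu((s,1]) = \lim_{t \downarrow s} \mu((t,1]) = \lim_{t \downarrow s}(1 - \mu([0,t])) = 0$. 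Here one must handle the boundary cases: if $s = 0$ then $\mu([0,s)) = \mu(\emptyset) = 0$ trivially, and if $s = 1$ then $\mu((s,1]) = \mu(\emptyset) = 0$ trivially. Combining, $\mu([0,1] \setminus \{s\}) = \mu([0,s)) + \mu((s,1]) = 0$, so $\mu(\{s\}) = 1$, i.e. $\mu = \delta_s$.

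I expect the only genuine subtlety — hardly an obstacle — to be the careful bookkeeping at the endpoints $s = 0$ and $s = 1$ and making sure the one-sided limits of $\mu([0,t])$ are justified by the appropriate continuity-of-measure statement (from above for decreasing sequences of closed sets, from below for increasing sequences of sets whose union is the set in question). Everything else is the standard fact that a $\{0,1\}$-valued monotone function on $[0,1]$ that is, in a suitable sense, the distribution function of a probability measure must be a single step function. Once the lemma is in hand, applying it to $\mu_{y_P}$ for $\mu_1$-a.e. $y_P$ yields that $\mu_P$ is mono-atomic for $\hat\mu$-a.e. $P \in \PP_f$, and the non-absolute continuity of $\PP_f$ then follows from the Corollary in Section \ref{rok} (an absolutely continuous foliation is leafwise absolutely continuous III, whose conditional measures are equivalent to Lebesgue on the leaf and hence non-atomic), completing the proof of Theorem \ref{main}.
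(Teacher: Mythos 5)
Your proof is correct, but it takes a genuinely different route from the paper's. The paper argues by contradiction through the topological support of $\mu$: if $\mu$ were not a Dirac mass, its support would contain two points $x_1<x_2$, which can be separated by disjoint closed intervals $I_1,I_2$ with rational endpoints; since points of the support force $\mu(I_1),\mu(I_2)>0$, the hypothesis gives $\mu(I_1)=\mu(I_2)=1$, contradicting $I_1\cap I_2=\emptyset$. You instead work with the distribution function: you upgrade the zero--one dichotomy from rational to arbitrary endpoints by continuity of measure, locate the atom as $s=\sup\{t\in[0,1]:\mu([0,t])=0\}$, and check $\mu([0,s))=\mu((s,1])=0$ directly. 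The paper's version is shorter and relies only on the standard facts that the support of a Borel probability on $[0,1]$ is nonempty and that a measure whose support is a singleton is Dirac; yours is longer but constructive, exhibiting the point $x=s$ explicitly rather than inferring its existence, and it bypasses the notion of support altogether. The one-sided limits you flag as the only subtlety are indeed justified by continuity from below and from above applied to the monotone families $[0,t]$ and $(t,1]$, so your argument is complete as written.
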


\begin{proof}
Assume that the statement is false, so there exist $x_1,x_2\in[0,1]$ with $x_1<x_2$ such that $x_1,x_2\in\supp(\mu)$. Is enough to consider two intervals $I_1=[b_1,b_2]$ and $I_2=[c_1,c_2]$ such that $I_1\cap I_2=\emptyset$, $x_1\in I_1$ and $x_2\in I_2$, $b_1,b_2,c_1,c_2\in\Q$.

\begin{figure}[h]
\begin{center}
\includegraphics[width=10cm]{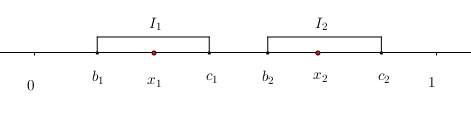}
\end{center}
\end{figure}

Since $x_1\in I_1$, $x_2\in I_2$ and $x_1,x_2\in\supp(\mu)$, then $\mu(I_1),\mu(I_2)>0$. So, for the condition of $\mu$ we have that $\mu(I_1)=\mu(I_2)=1$ and $I_1\cap I_2=\emptyset$, a contradiction.
\end{proof}

With this lemma, define the projection in the first variable $\pi_1:M\to[0,1]$ and the measure $\tilde{\mu}_P(A):=(\pi_1)_{*}\mu_{y_P}(A)=\mu_{y_P}(\pi_1^{-1}(A))$, for any measurable set $A\subset [0,1]$. So, if $\tilde{I}=I\times[0,1]$ with $I=[b_1,b_2]$, $b_1,b_2\in\Q$, then $\tilde{\mu}_{y_P}(I)=(\pi_1)_{*}\mu_{y_P}(I)=\mu_{y_P}(\pi_1^{-1}(I))=\mu_{y_P}(\tilde{I})=0$ or $1$. But for the lemma we have that $\tilde{\mu}_{y_P}=\delta_{x_P}$, for some $x_P\in[0,1]$. 

\newpage

\begin{figure}[h]
\begin{center}
\includegraphics[width=12cm]{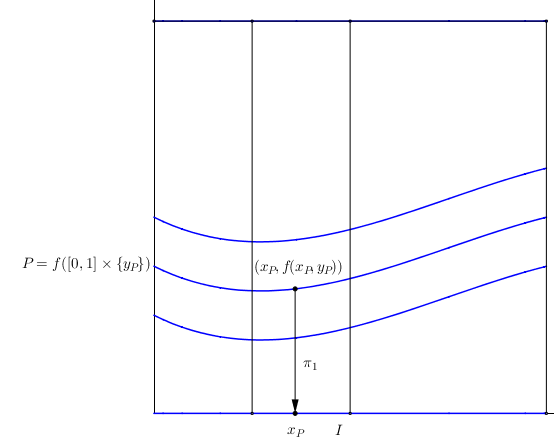}
\end{center}
\end{figure}

Finally, since $\supp(\mu_{y_P})\subset P$ and $(\pi_1)_{*}\mu_{y_P}=\delta_{x_P}$, we have $\mu_{y_P}=\delta_{(x_P,f(x_P,y_P))},$ for $\hat{\mu}$-a.e. $P\in\PP_f$.
\end{proof}

\begin{rem} The result could be extended to other spaces of foliations on the torus, in higher dimensions, etc.
\end{rem}

\textbf{Acknowledgments:} This research was supported by MATH AM-Sud Project 16-math-06 Physeco, PAI / Atracci\'on de capital humano avanzado del extranjero N$^{\circ}$PAI80160049 and FONDECYT Grant 1171477. Enzo Fuentes was partially supported by Beca CONICYT-PCHA / Doctorado Nacional / 2017-21170175.

\bibliographystyle{abbrv}
\bibliography{refe}

\begin{thebibliography}{10}

\bibitem{A67}
D.~V. Anosov.
\newblock Geodesic flows on closed {R}iemannian manifolds of negative
  curvature.
\newblock {\em Trudy Mat. Inst. Steklov.}, 90:209, 1967.

\bibitem{AVW15}
A.~Avila, M.~Viana, and A.~Wilkinson.
\newblock Absolute continuity, {L}yapunov exponents and rigidity {I}: geodesic
  flows.
\newblock {\em J. Eur. Math. Soc. (JEMS)}, 17(6):1435--1462, 2015.

\bibitem{BB03}
A.~T. Baraviera and C.~Bonatti.
\newblock Removing zero {L}yapunov exponents.
\newblock {\em Ergodic Theory Dynam. Systems}, 23(6):1655--1670, 2003.

\bibitem{BP01}
L.~Barreira and Y.~Pesin.
\newblock Lectures on {L}yapunov exponents and smooth ergodic theory.
\newblock In {\em Smooth ergodic theory and its applications ({S}eattle, {WA},
  1999)}, volume~69 of {\em Proc. Sympos. Pure Math.}, pages 3--106. Amer.
  Math. Soc., Providence, RI, 2001.
\newblock Appendix A by M. Brin and Appendix B by D. Dolgopyat, H. Hu and
  Pesin.

\bibitem{BP74}
M.~I. Brin and J.~B. Pesin.
\newblock Partially hyperbolic dynamical systems.
\newblock {\em Izv. Akad. Nauk SSSR Ser. Mat.}, 38:170--212, 1974.

\bibitem{G12}
A.~Gogolev.
\newblock How typical are pathological foliations in partially hyperbolic
  dynamics: an example.
\newblock {\em Israel J. Math.}, 187:493--507, 2012.

\bibitem{HP07}
M.~Hirayama and Y.~Pesin.
\newblock Non-absolutely continuous foliations.
\newblock {\em Israel J. Math.}, 160:173--187, 2007.

\bibitem{M87}
R.~Ma{\~n}{\'e}.
\newblock {\em Ergodic theory and differentiable dynamics}, volume~8 of {\em
  Ergebnisse der Mathematik und ihrer Grenzgebiete (3) [Results in Mathematics
  and Related Areas (3)]}.
\newblock Springer-Verlag, Berlin, 1987.
\newblock Translated from the Portuguese by Silvio Levy.

\bibitem{M97}
J.~Milnor.
\newblock Fubini foiled: {K}atok's paradoxical example in measure theory.
\newblock {\em Math. Intelligencer}, 19(2):30--32, 1997.

\bibitem{P76}
J.~B. Pesin.
\newblock Families of invariant manifolds that correspond to nonzero
  characteristic exponents.
\newblock {\em Izv. Akad. Nauk SSSR Ser. Mat.}, 40(6):1332--1379, 1440, 1976.

\bibitem{PS72}
C.~Pugh and M.~Shub.
\newblock Ergodicity of {A}nosov actions.
\newblock {\em Invent. Math.}, 15:1--23, 1972.

\bibitem{PVW07}
C.~Pugh, M.~Viana, and A.~Wilkinson.
\newblock {Absolute} {continuity} {of} {foliations}.
\newblock {\em In preparation}, pages 1--13, 2007.

\bibitem{RY80}
C.~Robinson and L.~S. Young.
\newblock Nonabsolutely continuous foliations for an {A}nosov diffeomorphism.
\newblock {\em Invent. Math.}, 61(2):159--176, 1980.

\bibitem{Ro67}
V.~A. Rokhlin.
\newblock {Lectures} {on} {the} {entropy} {theory} {of} {measure}-{preserving}
  {transformations}.
\newblock {\em Russian Mathematical Surveys}, 22(5):1--52, oct 1967.

\bibitem{RW01}
D.~Ruelle and A.~Wilkinson.
\newblock Absolutely singular dynamical foliations.
\newblock {\em Comm. Math. Phys.}, 219(3):481--487, 2001.

\bibitem{SX09}
R.~Saghin and Z.~Xia.
\newblock Geometric expansion, {L}yapunov exponents and foliations.
\newblock {\em Ann. Inst. H. Poincar\'e Anal. Non Lin\'eaire}, 26(2):689--704,
  2009.

\bibitem{SW00}
M.~Shub and A.~Wilkinson.
\newblock Pathological foliations and removable zero exponents.
\newblock {\em Invent. Math.}, 139(3):495--508, 2000.

\bibitem{Vi16}
M.~Viana and K.~Oliveira.
\newblock {\em Foundations of Ergodic Theory}.
\newblock Cambridge University Press, 2016.

\bibitem{VY13}
M.~Viana and J.~Yang.
\newblock Physical measures and absolute continuity for one-dimensional center
  direction.
\newblock {\em Ann. Inst. H. Poincar\'e Anal. Non Lin\'eaire}, 30(5):845--877,
  2013.

\end{thebibliography}

\end{document}